\newtheorem{theorem}{Theorem}[section]
\newtheorem{lemma}[theorem]{Lemma}
\newtheorem{corollary}{Corollary}[section]
\theoremstyle{definition}
\newtheorem{remark}{Remark}[section]
\numberwithin{equation}{section}
\newcommand{\R}{\mathbb{R}}
\newcommand{\EE}{\mathbb{E}}
\newcommand{\NN}{\mathbb{N}}
\newcommand{\cl}{{\rm cl }\,}
\newcommand{\dx}{{\rm d}}
\newcommand{\bd}{\mathrm{bd}\,}
\title{On random approximations by generalized disc-polygons}
\author{Ferenc Fodor$^1$} 
\address{Department of Geometry, Bolyai Institute, University of Szeged, 
Aradi v\'ertan\'uk tere 1, 6720 Szeged, Hungary} 
\email{fodorf@math.u-szeged.hu}
\author{D\'aniel I. Papv\'ari$^2$}
\address{University of Szeged, Aradi v\'ertan\'uk tere 1, 6720 Szeged, Hungary}
\email{papvari.daniel.istvan@stud.u-szeged.hu}
\author{Viktor V\'{\i}gh$^3$}
\address{Department of Geometry, Bolyai Institute, 
University of Szeged, Aradi v\'ertan\'uk tere 1, 6720 Szeged, Hungary}
\email{vigvik@math.u-szeged.hu}
\thanks{$^{1,2,3}$ Supported by the Ministry of Human Capacities, Hungary grant 20391-3/2018/FEKUSTRAT}
\thanks{$^{1,3}$ Supported by Hungarian National 
	Research, Development and Innovation Office NKFIH grant K 116451.}
\subjclass[2010]{Primary 52A22, Secondary 52A27, 60D05}
\begin{document}
\bibliographystyle{amsplain}

\begin{abstract}
For two convex discs $K$ and $L$, we say that $K$ is $L$-convex \cite{L} if it is equal to the intersection of all translates of $L$ that contain $K$. In $L$-convexity the set $L$ plays a similar role as closed half-spaces do in the classical notion of convexity.  We study the following probability model: Let $K$ and $L$ be $C^2_+$ smooth convex discs such that $K$ is $L$-convex. Select $n$ i.i.d. uniform random points $x_1,\ldots, x_n$ from $K$, and consider the intersection $K_{(n)}$ of all translates of $L$ that contain all of $x_1,\ldots, x_n$. The set $K_{(n)}$ is a random $L$-convex polygon in $K$. We study the expectation of the number of vertices $f_0(K_{(n)})$ and the missed area $A(K\setminus K_{n})$ as $n$ tends to infinity. We consider two special cases of the model. In the first case we assume that the maximum of the curvature of the boundary of $L$ is strictly less than $1$ and the minimum of the curvature of $K$ is larger than $1$. In this setting the expected number of vertices and missed area behave in a similar way as in the classical convex case and in the $r$-spindle convex case (when $L$ is a radius $r$ circular disc), see \cite{FKV14}. 
The other case we study is when $K=L$. This setting is special in the sense that an interesting phenomenon occurs: the expected number of vertices tends to a finite limit depending only on $L$. This was previously observed in the special case when $L$ is a circle of radius $r$ in \cite{FKV14}. We also determine the extrema of the limit of the expectation of the number of vertices of $L_{(n)}$ if $L$ is a convex discs of constant width $1$. The formulas we prove can be considered as generalizations of the corresponding $r$-spindle convex statements proved by Fodor, Kevei and V\'\i gh in \cite{FKV14}. 
\end{abstract}

\maketitle

\section{Introduction and results}
R\'enyi and Sulanke started the investigation of the asymptotic properties of random polytopes in their seminal papers \cites{RS63, RS64, RS68}. They studied the planar version of the following probability model: Let $K$ be a convex body (compact convex set with interior points) in Euclidean $d$-space $\R^d$, and select $n$ i.i.d random points $x_1, \ldots, x_n$ from $K$ according to the uniform probability distribution. The convex hull of the random points $x_1, \ldots, x_n$ is a (random) polytope $K_n$ in $K$, which tends to $K$ with probability $1$ as $n\to\infty$. Common random variables associated with such polytopes are, for example, the number of $i$-dimensional faces for $i=0,\ldots, d-1$, and the difference of the $j$th intrinsic volumes of $K$ and $K_n$ for $j=1,\ldots, d-1$. After the works of R\'enyi and Sulanke, many of the results in the theory of random polytopes have been of asymptotic type, meaning that they describe the limiting behaviour of some aspect, such as expectation, variance, etc., of a random variable as the number of points $n$ tends to infinity. Our motivations come, in part, from the asymptotic formulas proved by R\'enyi and Sulanke for the expected number of vertices (\cite[Satz 3. p. 83]{RS63}) and the missed area (\cite[Satz 1. (48) p. 144]{RS64}) of random convex polygons in sufficiently smooth convex discs. Our aim is to prove similar statements in a different, and somewhat more general, setting in the Euclidean plane. In the last few decades the literature on this topic has grown enormously, especially in the general $d$-dimensional setting. We do not venture to give an overview of the subject in this paper, instead, we refer to the comprehensive surveys \cites{Bar08, H13, R10, Schneider, Sch18, Schn8, Sch08, Weil93} for more information and references.

Recently, another probability model of random polytopes emerged that is based on intersections of congruent closed balls of suitable radius, see \cites{F19, FKV14, FV18}. For a fixed $r>0$, a convex disc $K\subset\R^2$ is called $r$-spindle convex (sometimes also called $r$-hyperconvex \cite{FTGF15} or $r$-convex \cites{FT182, FT282}) if, together with any two points $x,y\in K$, the set $[x,y]_r$, consisting of all shorter circular arcs of radius at least $r$ and connecting $x$ and $y$, is contained in $K$. One can also think of $[x,y]_r$ as the intersection of all radius $r$ closed circular discs that contain $x$ and $y$. In this concept, the set $[x,y]_r$ plays a similar role as the segment in the classical notion of convexity. The intersection of a finite number of radius $r$ circles is called a disc polygon of radius $r$. The concept of spindle convexity emerged from a paper of Mayer \cite{M35}, and has subsequently been investigated from different points of view. For more information on spindle convex sets and further references we refer to the paper by Bezdek, L\'angi, Nasz\'odi and Papez \cite{BL07} and the recent book by Martini, Montejano and Oliveros \cite{MMO19}. We only note that the importance of spindle convexity lies, at least partly, in the role intersections of congruent balls play in the study of, for example, the Kneser-Poulsen conjecture, diametrically complete bodies, randomized isoperimetric inequalities, etc., for more on this topic and references we suggest to consult \cites{BL07, FV12, FTGF15, FoKuVi16, MMO19, PP17}.

If one selects $n$ i.i.d. random points $x_1,\ldots, x_n$ from an $r$-spindle convex disc $K$ according to the uniform probability distribution, then the intersection of all radius $r$ discs that contain $x_1,\ldots, x_n$ is a random disc polygon $K_{(n)}^r$ of radius $r$ in $K$. Due to the $r$-spindle convexity, this random disc polygon is contained in $K$. In a recent paper, Fodor, Kevei and V\'igh \cite{FKV14} proved asymptotic formulas for the expectation of the number of vertices, missed area, and perimeter difference of $K_{(n)}^r$ under suitable smoothness assumption on the boundary of $K$. 
These asymptotic formulas are generalizations of the corresponding classical results of R\'enyi and Sulanke in the limit as $r\to\infty$. 
Asymptotic estimates on the variance of the number of vertices and missed area were established in \cite{FV18} for smooth $r$-spindle convex disc.
The $r$-spindle convex probability model was generalized to $d$ dimensions in \cite{F19} where an asymptotic formula was proved for the expected number of proper facets of the resulting random ball-polytope (\cite{F19}*{Theorem~1.1}) in the case when a ball of radius $r$ is approximated by random ball-polytopes of radius $r$, and asymptotic upper and lower bounds were established for the expected number of proper facets for general convex bodies with sufficiently smooth boundary and suitable radius $r$.  

The notion of spindle convexity can further be generalized by replacing the radius $r$ circular disc by a fixed convex disc $L$. This leads to the notions of $L$-convexity and $L$-spindle convexity, as introduced in \cite{L}. For a historical overview of this topic and references consult the Introduction of \cite{L}.

Let $K$ and $L$ be convex discs (to avoid technical complications we always assume that the sets involved are compact). We say that $K$ is $L$-convex (\cite{L}*{Definition~1.1}) if it is equal to the intersection of all translates of $L$ that contain $K$. Of course, if $K$ is $L$-convex, then it is also convex in the usual sense. 
Let $X\subset\R^2$ be a set contained in a translate of $L$. We denote the intersection of all translates of $L$ that contain $X$ by $[X]_L$. The set $[X]_L$ is called the $L$-convex hull of $X$. If $L$ is strictly convex and $X$ has at least two points, then the interior of $[X]_L$ is non-empty.

We say that the convex disc $K$ is $L$-spindle convex (\cite{L}*{Definition~1.2}) if it is contained in a translate of $L$ and for any $x,y\in K$ it holds that $[x,y]_L\subset K$. 
It is clear that if $K$ is $L$-convex, then it is also $L$-spindle convex. The converse is also true (in the plane), see \cite[Corollary~3.13, p. 51]{L}. Thus, in our case the two notions of convexity determined by $L$ are equivalent and can be used interchangeably. We note that $L$-convexity and $L$-spindle convexity can be defined analogously in $d$ dimension as well, but from $d\geq 3$ the two properties are no longer equivalent, see \cite[Theorem~3, p. 48]{L}. 

In classical convexity a closed convex set is known to have a supporting hyperplane through any of its boundary points. We say that a  convex set is smooth if this supporting hyperplane is unique at each boundary point. A similar property holds for $L$-convex discs too, see \cite[Theorem~4, p. 50]{L}: If $K$ is $L$-convex, $x\in \bd K$ and $l$ is a supporting line of $K$ through $x$, then there exists a translate $L+p$  such that $x\in L+p$, $K\subset L+p$ and  $l$ supports $L+p$ at $x$. In this case we call $L+p$ a supporting disc of $K$ at $x$.  
It clearly follows that if both $K$ and $L$ are smooth, then $K$ has a unique supporting disc at each boundary point. 

We note that the existence of a supporting translate of $L$ at each point of $\bd K$ is also known as the property that $K$ slides freely in $L$, see \cite[p. 156]{Schneider}.

The $L$-convex property is invariant under translations of $K$, and also under homotheties with ratio strictly between $0$ and $1$, that is, $K$ is $L$-convex if and only if $\lambda K+p$ is $L$-convex for all $\lambda\in (0,1)$ and $p\in \R^2$, see \cite[Corollary~3.7, p. 47]{L}).

We study the following probability model. Let $K$ and $L$ be convex discs with $C^2_+$ smooth boundary (twice continuously differentiable with strictly positive curvature everywhere) such that $K$ is $L$-convex. The $C^2_+$ property yields that both $K$ and $L$ are strictly convex. 
 Let $x_1,\ldots, x_n$
be i.i.d. random points from $K$ selected according to the uniform probability distribution. We call $K_{(n)}=[x_1,\ldots, x_n]_L$ a uniform random $L$-polygon contained in $K$. A point $x_{i_j}\in \{x_1,\ldots, x_n\}$ is a vertex of $K_{(n)}$ if it is a non-smooth point of $\bd K_{(n)}$. The vertices $x_{i_1}, \ldots x_{i_k}$, $2\leq k\leq n$ of $K_{(n)}$ divide $\bd K_{(n)}$ into $k$ arcs, which we call sides, each of which is a connected arc of the boundary of a translate of $L$.  Let $f_0(K_{(n})$ denote the number of vertices of $K_{(n)}$, and $A(K\setminus K_{(n)})$ the missed area. We investigate the asymptotic behaviour of the expectations of $f_0(K_{(n})$ and $A(K\setminus K_{(n)})$. 

Our paper contains the discussion of two special cases of this probability model. In the firs one, we  make the following further assumption on the curvatures of the boundaries of $K$ and $L$:
\begin{equation}\label{cuvr-cond}
\max_{x\in\bd L} \kappa_L (x)<1< \min_{y\in\bd K} \kappa_K(y),
\end{equation}
where $\kappa_L(x)$ is the curvature of $\bd L$ at $x$ and $\kappa_K(y)$ is the curvature of $\bd K$ at $y$. 

We note 
that \cite[Theorem~3.2.12, p. 164]{Schneider} states that for two $C^2_+$ smooth convex discs, $K$ slides freely in $L$ if and only if the curvature of $\bd K$ is at least as large as the curvature of $\bd L$ in points where the outer unit normals are equal. This condition is clearly satisfied under the  assumption \eqref{cuvr-cond}, thus, in this case $K$ slides freely in $L$ and so $K$ is $L$-convex.

Under the assumption \eqref{cuvr-cond}, the expected number of vertices and the missed area both behave in a similar manner as in the usual convex case.

\begin{theorem}\label{fotetel}
	With the above assumptions
\begin{equation}
\lim_{n\to\infty}\EE (f_0(K_{(n)}))n^{-\frac 13}=\sqrt[3]{\frac{2}{3 A(K)}}\Gamma\left(\frac53\right)\int_{S^1}\frac{\left(\kappa_K(u)-\kappa_L(u)\right)^{\frac 13}}{\kappa_K(u)}\dx u.
\end{equation}
\end{theorem}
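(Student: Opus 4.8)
The plan is to follow the classical Rényi--Sulanke integral-geometric approach, adapted to the $L$-convex setting. First I would express the expected number of vertices via an identity counting, for each pair of indices, the probability that the corresponding two points are consecutive vertices of $K_{(n)}$. More precisely, a point $x_i$ is a vertex of $K_{(n)}$ exactly when there is a translate $L+p$ through $x_i$ containing all the other $x_j$; and two points $x_i,x_j$ are endpoints of a common side when some translate $L+p$ passes through both and contains the remaining $n-2$ points. Writing this out, $\EE f_0(K_{(n)})$ becomes $\binom n2$ times an integral over pairs $(x,y)\in K^2$ of the probability that the ``$L$-cap'' cut off by the (unique, by $C^2_+$ smoothness and \eqref{cuvr-cond}) supporting translate of $L$ through $x$ and $y$ on the far side contains none of the other points, i.e. $(1-A(\text{cap})/A(K))^{n-2}$, weighted by the appropriate Jacobian factor coming from the change of variables from $(x,y)$ to boundary-type coordinates.

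The key computational step is the change of variables. I would parametrize by the supporting disc $L+p$: such a disc touching $\bd K$ meets $\bd K$ in a small arc, and I would use the two endpoints of the chord, or better, the outer normal direction $u\in S^1$ of the supporting line together with arc-length-type parameters along $\bd(L+p)$ measuring the positions of $x$ and $y$. The Jacobian of the map $(x,y)\mapsto(u,t_1,t_2)$ near the boundary is, to leading order, governed by the difference of curvatures: since $K$ locally lies inside $L+p$ and both osculate the supporting line at the contact point, the gap between $\bd K$ and $\bd(L+p)$ behaves like $\tfrac12(\kappa_K(u)-\kappa_L(u))\,s^2$ in terms of arc length $s$ from the contact point. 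This is precisely where condition \eqref{cuvr-cond} enters, guaranteeing $\kappa_K(u)-\kappa_L(u)>0$ so the relevant caps are genuinely small and the local picture is a parabola-like region of width $\sim s$ and height $\sim(\kappa_K-\kappa_L)s^2$. The cap area then scales like $(\kappa_K(u)-\kappa_L(u))^{-1/2}h^{3/2}$ in terms of its height $h$, exactly as in the convex case with $\kappa_K$ replaced by $\kappa_K-\kappa_L$.

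Next I would carry out the asymptotic evaluation. After the substitution, the double integral over the two side-endpoint parameters, together with $\binom n2(1-A(\text{cap})/A(K))^{n-2}$, is handled by a Laplace-type / Blaschke--Petkantschin-style argument: rescale the height variable by $n^{-2/3}$, so that $n\cdot A(\text{cap})$ stays of order $1$, and the dominant contribution localizes near $\bd K$. The $n$-dependence factors out as $n^{1/3}$ after the rescaling, the exponential $(1-A(\text{cap})/A(K))^{n-2}$ converges to $\exp(-n A(\text{cap})/A(K))$, and the remaining one-dimensional integrals over the rescaled height and the side-length produce a Gamma-function constant; integrating what is left over $u\in S^1$ gives the stated integrand $(\kappa_K(u)-\kappa_L(u))^{1/3}/\kappa_K(u)$. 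The factor $\sqrt[3]{2/(3A(K))}$ and $\Gamma(5/3)$ emerge from the precise constants in the cap-area expansion and the one-dimensional Laplace integral, just as in \cite{RS63} and \cite{FKV14}.

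The main obstacle, I expect, is making the local analysis near $\bd K$ fully rigorous and uniform: one must show that caps not close to $\bd K$ contribute negligibly (a standard but technical ``economic cap covering'' or direct estimate), that the unique supporting translate of $L$ through two nearby boundary-ish points varies smoothly in $u$ so the Jacobian computation is valid, and that the error terms in the second-order Taylor expansion of both $\bd K$ and $\bd(L+p)$ are genuinely lower order after the $n^{-2/3}$ rescaling. The $C^2_+$ hypothesis and \eqref{cuvr-cond} together should give enough uniform control of curvatures and of the size of the contact arcs to push this through, but the bookkeeping of the two-parameter side variable (as opposed to the single chord-endpoint parameter in the strictly convex polygon case) is the delicate part, and is where I would invest the most care; it is essentially the same difficulty resolved in \cite{FKV14} in the circular case, and the argument there should transfer with $r^{-1}$ replaced by $\kappa_L(u)$.
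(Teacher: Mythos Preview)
Your outline matches the paper's proof almost exactly: the R\'enyi--Sulanke pair-counting identity, a reparametrization of $(x_1,x_2)$ by the $L$-cap they determine, the cap-area expansion $A\sim c(\kappa_K-\kappa_L)^{-1/2}h^{3/2}$, a Laplace-type evaluation after rescaling, and dominated convergence over $S^1$. Two concrete points, however, would block the write-up as stated.

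First, the claim that there is a \emph{unique} translate of $L$ whose boundary passes through two given points $x,y\in K$ is false: generically there are exactly two, cutting off a small cap $C_-(x,y)$ and a large cap $C_+(x,y)$. The correct starting identity (the paper's \eqref{eq:expectation}) therefore has \emph{two} terms, one for each cap, and the $C_+$ term must be shown to be negligible. This does not fall out of the local analysis near $\bd K$; the paper handles it by a separate compactness lemma (Lemma~\ref{lemma:delta}) giving a uniform lower bound $A_+(x,y)>\delta>0$, which makes that term exponentially small in $n$.

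Second, your map $(x,y)\mapsto(u,t_1,t_2)$ goes from a four-dimensional space to a three-dimensional one, so it cannot serve as a change of variables. The paper uses the four-parameter map $\Phi(u,t,u_1,u_2)$ of \eqref{forjacobi}: $u\in S^1$ is the cap's vertex direction, $t$ its height, and $u_1,u_2\in S^1$ the outer normals of $L+p$ at $x_1,x_2$; this yields the explicit Jacobian
\[
|J\Phi|=\frac{|u_1\times u_2|}{\kappa_L(u_1)\kappa_L(u_2)}\Bigl(\frac{1}{\kappa_L(u)}-\frac{1}{\kappa_K(u)}+t\Bigr).
\]
That the pair $(u,t)$ is uniquely determined by the cap is itself a lemma (Lemma~\ref{capvertexexistsandunique}) whose proof uses the strict curvature gap in \eqref{cuvr-cond} in an essential way. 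With these two repairs your sketch is the paper's argument.
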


Efron's identity \cite{E65}, which, in two dimensions, relates the expectation of the number of vertices and the missed area, can be easily extended to the  $L$-convex probability model as follows
$$\EE (f_0(K_{(n)}))=n \frac{1}{A(K)}\EE (A(K\setminus K_{(n-1)})).$$
Thus we obtain the following corollary of Theorem~\ref{fotetel}:

\begin{corollary}\label{fotetel:terulet}
	With the same conditions as above
\begin{equation}
\lim_{n\to\infty}\EE (A(K\setminus K_{(n)}))n^{\frac 23}=\sqrt[3]{\frac{2A^2(K)}{3}}\Gamma\left(\frac53\right)\int_{S^1}\frac{\left(\kappa_K(u)-\kappa_L(u)\right)^{\frac 13}}{\kappa_K(u)}\dx u.
\end{equation}
\end{corollary}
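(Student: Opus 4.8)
The plan is to deduce Corollary~\ref{fotetel:terulet} from Theorem~\ref{fotetel} through the Efron-type identity stated above, which in the present model reads
$$\EE (f_0(K_{(n)}))=\frac{n}{A(K)}\,\EE (A(K\setminus K_{(n-1)})).$$
The first step is to make sure this identity is actually available here. The classical argument carries over essentially unchanged: for i.i.d.\ uniform points $x_1,\dots,x_n$ one has, almost surely, that $x_i$ fails to be a vertex of $[x_1,\dots,x_n]_L$ exactly when $x_i\in[x_1,\dots,\widehat{x_i},\dots,x_n]_L$, so writing $\EE(f_0(K_{(n)}))$ as a sum of $n$ such probabilities and conditioning on the position of $x_n$ relative to $[x_1,\dots,x_{n-1}]_L$ yields the identity. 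This uses only the definition of vertices as non-smooth boundary points; the curvature hypothesis~\eqref{cuvr-cond} plays no role. Solving for the missed area and replacing $n$ by $n+1$ gives
$$\EE (A(K\setminus K_{(n)}))=\frac{A(K)}{n+1}\,\EE (f_0(K_{(n+1)})).$$

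The second step is an elementary limit computation. Multiplying by $n^{2/3}$ and rearranging to isolate the quantity whose limit is known,
\begin{equation*}
\EE (A(K\setminus K_{(n)}))\,n^{\frac23}=A(K)\left(\frac{n}{n+1}\right)^{\frac23}\cdot\EE (f_0(K_{(n+1)}))\,(n+1)^{-\frac13}.
\end{equation*}
As $n\to\infty$ the factor $(n/(n+1))^{2/3}$ tends to $1$, while Theorem~\ref{fotetel}, applied with $n+1$ in place of $n$, shows that $\EE (f_0(K_{(n+1)}))\,(n+1)^{-1/3}$ converges to
$$\sqrt[3]{\frac{2}{3A(K)}}\,\Gamma\!\left(\frac53\right)\int_{S^1}\frac{\left(\kappa_K(u)-\kappa_L(u)\right)^{\frac13}}{\kappa_K(u)}\,\dx u.$$
Taking the product of the limits and simplifying $A(K)\cdot\sqrt[3]{2/(3A(K))}=\sqrt[3]{2A^2(K)/3}$ produces precisely the asserted formula.

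There is no genuine obstacle here: once Theorem~\ref{fotetel} is established, the corollary is immediate. The only point requiring a modicum of care is the verification of Efron's identity in the $L$-convex setting, but, as indicated, this is a purely combinatorial and measure-theoretic matter, independent of the smoothness and curvature assumptions, and it is valid for any pair of convex discs with $K$ being $L$-convex.
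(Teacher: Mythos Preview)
Your proof is correct and follows exactly the route the paper takes: the corollary is obtained from Theorem~\ref{fotetel} via the Efron-type identity $\EE(f_0(K_{(n)}))=\frac{n}{A(K)}\EE(A(K\setminus K_{(n-1)}))$, which the paper states just before the corollary. You have simply made the elementary limit manipulation and the justification of Efron's identity in the $L$-convex setting explicit, whereas the paper leaves both implicit.
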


We note that Theorem~\ref{fotetel} and Corollary~\ref{fotetel:terulet} are contained in D. Papv\'ari's Bachelor's thesis \cite{P19}. The proof of Theorem~\ref{fotetel} is also from \cite{P19}. 

Notice that in both Theorem~\ref{fotetel} and Corollary~\ref{fotetel:terulet}, we get back the corresponding statements of Fodor, Kevei and V\'\i gh \cite[Theorem 1.1]{FKV14} when $L$ is a circle of radius $r>1$. 

In the other special case of the probability model we investigate in this paper we assume that $K=L$, and denote the corresponding random $L$-convex polygon by $L_{(n)}$. This leads to an interesting phenomenon that cannot be observed in the usual convex case. Namely, the expectation of the number of vertices tends to a finite limit determined by only $L$. This has already been pointed out in the case when $L=B^2$ in the paper by Fodor, Kevei and V\'\i gh, see \cite{FKV14}*{Theorem~1.3}, and in $d$ dimensions for $L=B^d$ by Fodor \cite{F19}*{Theorem~1.1}.

To formulate a precise statement we introduce the following notation. For a unit vector $u\in S^1$, we denote by $w_L(u)=w(u)$ the distance between the two supporting lines $l_1(u)$ and $l_2(u)$ of $L$ that are parallel to $u$. This is the well-known width of $L$ in the direction $u^\perp$ orthogonal to $u$.

\begin{theorem}\label{LL}
Let $L$ be a convex disc with $C^2_+$ boundary. Then
\begin{align}
 \lim_{n\to\infty}\EE (f_0(L_{(n)}))=\pi\int_{S^1} \frac{1}{\kappa^2_L(u)\cdot w^2(u)} \dx u, \label{vertex-number-L}\\
 \lim_{n\to\infty} \EE (A(L\setminus L_{(n)}))\cdot n=A(L)\pi\int_{S^1} \frac{1}{\kappa^2_L(u)\cdot w^2(u)} \dx u.\label{missed-area-L}
\end{align}
\end{theorem}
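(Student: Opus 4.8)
The plan is to follow the integral-geometric strategy of R\'enyi--Sulanke as adapted to the disc-polygon setting in \cite{FKV14}, but to carry out the local analysis with the supporting disc $L+p$ playing the role that a radius-$r$ circle plays in the $r$-spindle convex case. For the vertex count, I would start from the identity
\begin{equation*}
\EE(f_0(L_{(n)}))=\sum_{j=1}^n \pr(x_j \text{ is a vertex of }L_{(n)}),
\end{equation*}
and by symmetry this equals $n\,\pr(x_n \text{ is a vertex})$. The key geometric observation, valid because $K=L$, is that a point $x$ near $\bd L$ is a vertex of $L_{(n)}$ precisely when there is a translate $L+p$ containing all of $x_1,\dots,x_n$ and passing through $x$ in such a way that $x$ is a non-smooth point of the hull; equivalently, the ``cap-like'' region cut off at $x$ by the appropriate translate of $L$ contains no other $x_i$. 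Because $L$ slides freely in itself, for each boundary direction $u\in S^1$ there is a unique supporting disc, and the relevant cap is the intersection $L\cap(L+p)$ where $L+p$ is a translate supporting $L$ from outside at the antipodal point; the width $w(u)$ enters because the displacement $|p|$ needed to bring a supporting disc from one side of $L$ to touch near $x$ is governed by the width in the direction $u^\perp$. I would parametrize these caps by $u\in S^1$ and the ``depth'' $t$, show that the area of the cap of depth $t$ at $u$ is asymptotically $c(u)\,t^{3/2}$ with $c(u)$ expressed through $\kappa_L(u)$ and $w(u)$, and then compute
\begin{equation*}
\pr(x_n \text{ is a vertex}) = \int_{S^1}\!\!\int_0^{t_{\max}(u)} \left(1-\frac{A_{\mathrm{cap}}(u,t)}{A(L)}\right)^{n-1} \frac{\partial}{\partial t}\!\left(\frac{A_{\mathrm{cap}}(u,t)}{A(L)}\right) \dx t\, \dx u
\end{equation*}
up to the usual error terms. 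The crucial difference from the classical case is that here $t$ ranges over a \emph{bounded} interval $[0,t_{\max}(u)]$ — the depth cannot exceed roughly the width — so the integral does not blow up: after substituting and letting $n\to\infty$, the $n^{1/3}$ scaling of the classical case is replaced by convergence to the finite constant $\pi\int_{S^1}\kappa_L^{-2}(u)w^{-2}(u)\,\dx u$.

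The main technical steps, in order, are: (i) set up the local coordinate description of the family of translates of $L$ supporting $L$, and identify for each $u$ and each small $t>0$ the unique cap $C(u,t)=L\setminus(L+p(u,t))$ that is ``cut off'' near the boundary point with outer normal $u$; (ii) derive the asymptotic expansion $A(C(u,t)) = \tfrac{4}{3}\sqrt{2}\,\kappa_L(u)^{1/2} t^{3/2}(1+o(1))$ as $t\to 0^+$ — this is a second-order Taylor computation using that both $\bd L$ and $\bd(L+p)$ have curvature $\kappa_L$ at the relevant point but are tangent to order one there; (iii) determine $t_{\max}(u)$, the largest depth for which the translate still exists, and show it is a positive constant determined by $w(u)$ (this is where $w(u)$ enters quantitatively, and where the boundedness of the $t$-range — hence the finite limit — comes from); (iv) plug into the integral above, use dominated convergence together with the substitution $s = n\,A(C(u,t))/A(L)$, and observe that because $A(C(u,t))$ stays bounded away from $0$ once $t$ is bounded away from $0$, only a neighborhood of $t=0$ contributes in the limit, yet — and this is the subtle point — the contribution does \emph{not} vanish because of the shape of the density near the endpoint; carefully tracking constants yields the factor $\pi$ and the exponents $-2$. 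Finally, (v) obtain \eqref{missed-area-L} from \eqref{vertex-number-L} by the Efron-type identity $\EE(f_0(L_{(n)}))=n A(L)^{-1}\EE(A(L\setminus L_{(n-1)}))$ stated in the excerpt, which immediately gives $\EE(A(L\setminus L_{(n)}))\cdot n \to A(L)\cdot\lim \EE(f_0(L_{(n)}))$.

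The step I expect to be the main obstacle is (iii)--(iv): correctly identifying which translates of $L$ actually arise as supporting discs of sub-hulls $L_{(n)}$ and pinning down the exact constant. In the $r$-circle case of \cite{FKV14} the role of $w(u)$ is played by the constant $2r$, and the geometry is rotationally clean; here $w(u)$ varies with direction and the caps are lens-shaped regions bounded by two arcs of \emph{different} translates of the same $L$, so the second-order geometry near the ``deep'' end of the cap is more delicate and one must be careful that the limiting integrand genuinely depends only on the local data $\kappa_L(u)$ and $w(u)$ and not on global features of $L$. I would handle this by reducing, via the substitution that linearizes the cap area, to exactly the one-dimensional integral that appears in \cite{FKV14} for each fixed $u$, with $r$ formally replaced by the locally relevant quantity, and then assembling over $u\in S^1$; verifying that this reduction is legitimate (in particular that the error terms are uniform in $u$, using the $C^2_+$ assumption and compactness of $S^1$) is the crux of the argument.
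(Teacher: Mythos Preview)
Your step (ii) contains the central error: when $K=L$, the cap area is \emph{not} of order $t^{3/2}$ but of order $t$. The two arcs bounding the cap $C(u,t)=\cl(L\setminus(L-tu))$ are congruent pieces of $\bd L$ with the \emph{same} curvature at the vertex, so they do not separate quadratically as in Lemma~\ref{landAasym}; the denominator $\kappa_K(u)-\kappa_L(u)$ there is zero. What actually happens is that the two intersection points of $\bd L$ and $\bd L-tu$ run all the way out to the tangency points of the supporting lines parallel to $u$, so the cap is a thin strip of width $\sim t$ stretching across the full width of $L$, and one has $A(u,t)\sim w(u)\,t$ and $\ell^*(u,t)\to\pi$ as $t\to0^+$ (this is Lemma~\ref{lemma:ellAmod} in the paper). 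It is this linear scaling, not the boundedness of the $t$-range you emphasize in (iii), that makes the limit finite: in the Jacobian $|J\Phi|$ the term $\kappa_L^{-1}(u)-\kappa_K^{-1}(u)$ vanishes and only the factor $t$ survives, so after applying \eqref{formula} with $\alpha=\beta=1$ and $\omega=w(u)/A(L)$ the powers of $n$ cancel exactly. Your proposed expansion $A(C(u,t))\sim\tfrac{4}{3}\sqrt{2}\,\kappa_L(u)^{1/2}t^{3/2}$ would instead reproduce the $n^{1/3}$ growth of Theorem~\ref{fotetel} and would never see $w(u)$.

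A secondary point: the paper does not use the single-point vertex probability you wrote down (and the displayed integral is not the correct expression for it in any case), but rather the pair-based formula \eqref{eq:expectation} with the $\binom{n}{2}$ prefactor, exactly as in Section~\ref{pf}. The proof then runs verbatim with Lemma~\ref{lemma:ellAmod} in place of Lemma~\ref{landAasym}: since $\ell^*(u,t)\to\pi$ one gets $\ell^*-\sin\ell^*\to\pi$, and combining this with $A(u,t)\sim w(u)t$ and the degenerate Jacobian yields the integrand $\pi\,\kappa_L^{-2}(u)\,w^{-2}(u)$ directly. Your step (v) via Efron's identity is correct.
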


We remark that \eqref{missed-area-L} follows from \eqref{vertex-number-L} by Efron's identity, thus we focus only on the number of vertices. We note that Theorem~\ref{LL} is particularly interesting in the case when $L$ is a convex disc of constant width $1$. (The expected number of vertices is clearly scaling invariant.) It is well-known that if $L$  has constant width $1$, then $\kappa_L^{-1}(u)+\kappa_L^{-1}(-u)=1$ (see, for example, in a more general setting on p. 341 in \cite{FoKuVi16}). This implies $\kappa_L^{-1}(u)<1$, and thus
$$\lim_{n\to\infty}\EE (f_0(L_{(n)}))=\pi\int_{S^1} \frac{1}{\kappa^2_L(u)\cdot w^2(u)} \dx u=\pi\int_{S^1} \frac{1}{\kappa^2_L(u)} \dx u<2\pi^2. $$
Also, by the arithmetic mean/quadratic mean inequality
$$\frac 14=\left (\frac{\kappa_L^{-1}(u)+\kappa_L^{-1}(-u)}{2} \right )^2 \le \frac{\kappa_L^{-2}(u)+\kappa_L^{-2}(-u)}{2},$$
 which implies
$$\lim_{n\to\infty}\EE (f_0(L_{(n)}))=\pi\int_{S^1} \frac{1}{\kappa^2_L(u)\cdot w^2(u)} \dx u=\pi\int_{S^1} \frac{1}{\kappa^2_L(u)} \dx u\ge\frac{\pi^2}{2}. $$
We note that both inequalities are sharp. The upper bound can be approximated by smoothed Reuleaux-polygons. Reuleaux-polygons are not smooth, however with a slight modification at the vertices one can construct a smooth convex disc of constant width $1$ such that the limit of the expectation of the number of the vertices is arbitrarily close to $2\pi^2$. The lower bound is achieved when $L$ is a circle, as it was shown in Theorem 1.3 in \cite{FKV14}.

We also note that if $L$ is a convex disc with $C^2_+$ boundary (but not necessarily of constant width), then the limit is still clearly bounded from below by $2$, but one can construct a sausage-like domain with arbitrarily large limit.

\section{Caps of $L$-convex discs}
In this section we assume that \eqref{cuvr-cond} holds for $K$ and $L$.
We call a subset $C$ of $K$ an {\em $L$-cap} if $C=\cl (K\setminus (L+p))$ for some $p\in \R^2$. Here $\cl(\cdot)$ denotes the closure of a set. Due to the condition \eqref{cuvr-cond} on the curvatures of the boundaries of $K$ and $L$, the curves $\bd K$ and $\bd L+p$ have exactly two intersection points. These two intersection points divide $\bd C$ into two parts, one belongs to $\bd K$ and the other one to $\bd L+p$. Below we state three technical lemmas that will be used in the subsequent arguments. We note that these lemmas are the $L$-convex analogues of the corresponding $r$-spindle convex statements in \cite{FKV14}, see Lemmas~4.1--4.2. 

For a smooth convex disc $M$, the unique outer unit normal at $x\in\bd M$ is denoted by
$u(M,x)$. If $M$ is also strictly convex, then for each $u\in S^1$ there exists a unique point $x=x(M,u)$ such that the outer unit normal of $\bd M$ at $x$ is $u$, that is, the functions $x(M,u)$ and $u(M,x)$ are inverse to each other. If $\bd M$ is $C^2_+$, then, with a slight abuse of notation, we use $\kappa_M(u)=\kappa_M(x(M,u))$ for the curvature of $\bd M$.

\begin{lemma}\label{capvertexexistsandunique}
Let $K$ and $L$ be as above. For an $L$-cap $C=\cl (K\setminus (L+p))$, there exists a unique point $x_0\in\bd C\cap \bd K$ and $t\geq 0$ such that
$y_0=x_0-t u(K,x_0)\in\bd D\cap (\bd L+p)$ and $u(L+p,y_0)=u(K,x_0)$. 
\end{lemma}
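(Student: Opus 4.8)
The plan is to parametrise both boundaries by the angle $\theta$ of the outer unit normal, write $u=u(\theta)=(\cos\theta,\sin\theta)$, $u'(\theta)=(-\sin\theta,\cos\theta)$, and, for a $C^2_+$ convex disc $M$, let $h_M$ be its support function, so that $x(M,\theta):=x(M,u(\theta))=h_M(\theta)u(\theta)+h_M'(\theta)u'(\theta)$, $h_M''(\theta)+h_M(\theta)=1/\kappa_M(\theta)$ (we write $\kappa_M(\theta)$ for $\kappa_M(u(\theta))$). Introducing
\[
\psi(\theta):=h_K(\theta)-h_{L+p}(\theta),\qquad\text{so that}\qquad x(K,\theta)-x(L+p,\theta)=\psi(\theta)u(\theta)+\psi'(\theta)u'(\theta),
\]
and setting $x_0=x(K,\theta_0)$, $y_0=x(L+p,\theta_0)$, the requirements $u(L+p,y_0)=u(K,x_0)$ and $y_0=x_0-t\,u(K,x_0)$ with $t\ge 0$ become exactly $\psi'(\theta_0)=0$ and $t=\psi(\theta_0)\ge 0$, together with the two incidence conditions that $x_0$ lie on the $\bd K$-part of $\bd C$ and $y_0$ on the $\bd(L+p)$-part of $\bd C$. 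So the lemma reduces to: there is a unique such $\theta_0$.

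The engine is the differential inequality
\[
\psi''(\theta)+\psi(\theta)=\frac{1}{\kappa_K(\theta)}-\frac{1}{\kappa_L(\theta)}<0\qquad(\theta\in S^1),
\]
immediate from \eqref{cuvr-cond}. Since $C$ is a genuine cap, $K\not\subset\inti(L+p)$, so $\psi$ is somewhere positive; let $\theta_0$ be a point where $\psi$ is maximal, $M:=\psi(\theta_0)>0$. A Wronskian comparison with the solution $\theta\mapsto M\cos(\theta-\theta_0)$ of $y''+y=0$ — using $(\psi'w-\psi w')'=(\psi''+\psi)w$ with $w(\theta)=\sin(\theta-\theta_0)$ and integrating out from $\theta_0$ — gives $\psi(\theta)<M\cos(\theta-\theta_0)$ for $\theta\ne\theta_0$. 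Hence $\{\psi>0\}$ lies in the half-circle $(\theta_0-\tfrac\pi2,\theta_0+\tfrac\pi2)$, and a second Wronskian argument on each side of $\theta_0$ rules out any further component, so $\{\psi>0\}$ is a single open arc $(\theta_-,\theta_+)$ on which $\psi''<-\psi<0$, i.e. $\psi$ is strictly concave. Thus $\theta_0$ is the only critical point of $\psi$ in $(\theta_-,\theta_+)$; and since the two crossings $P_1,P_2$ of $\bd K$ and $\bd(L+p)$ are transversal under \eqref{cuvr-cond} — at a tangency point $\bd K$ would, as $\kappa_K>\kappa_L$ there, lie locally strictly inside $L+p$, so that point would have a neighbourhood meeting no point of $K\setminus(L+p)$ and could not lie on $\bd C$ — $\psi$ and $\psi'$ have no common zero. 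Hence $\theta_0$ is the unique critical point of $\psi$ with $\psi\ge 0$, which already gives the uniqueness assertion.

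For existence I would take this $\theta_0$. Then $\psi'(\theta_0)=0$, $t=\psi(\theta_0)=M>0$, and since $h_K(\theta_0)>h_{L+p}(\theta_0)$ the point $x_0=x(K,\theta_0)$ is strictly outside $L+p$, hence on the $\bd K$-part of $\bd C$: the first incidence condition. The delicate step is the second, namely $y_0=x(L+p,\theta_0)=x_0-t\,u(\theta_0)\in K$, equivalently that the inward-normal chord of $K$ at $x_0$ has length at least $t$. Here I would use that the (unique, by smoothness) supporting disc of $K$ at $x_0$ is precisely $L+p+t\,u(\theta_0)$, so $K\subset L+p+t\,u(\theta_0)$, together with $K\cap(L+p)\ne\emptyset$ — a common point $w$ yields $h_{L+p}(\theta_0)+h_K(\theta_0+\pi)\ge 0$, i.e. $t$ does not exceed the width of $K$ in the direction $u(\theta_0)$ — to show that $u(\theta_0)$ does not lie in a normal cone of $K\cap(L+p)$ at $P_1$ or $P_2$, equivalently $h_{K\cap(L+p)}(\theta_0)=h_{L+p}(\theta_0)$; then $y_0=x\bigl(K\cap(L+p),\theta_0\bigr)\in K$. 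Since also $y_0\in\bd(L+p)$, $y_0$ lies on the $\bd(L+p)$-part of $\bd C$, finishing the proof. (When $t=0$, $x_0=y_0$ is one of the two crossings, still covered.)

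The real obstacle is this last step, the verification that $y_0=x(L+p,\theta_0)\in K$. The Sturm/Wronskian analysis of $\psi$ delivers everything else cleanly — existence and uniqueness of the critical point, strict concavity on $(\theta_-,\theta_+)$, and the first incidence condition — essentially for free. What needs care is bookkeeping how the outer-normal angles of the crossings $P_1,P_2$ interleave with $\theta_-,\theta_+$: one checks that the crossing normals of $L+p$ lie strictly inside $(\theta_-,\theta_+)$, so the normal range of the $\bd(L+p)$-arc of $\bd C$ is a proper subarc of $(\theta_-,\theta_+)$, and then uses the containment $K\subset L+p+t\,u(\theta_0)$ to place $\theta_0$ inside that subarc.
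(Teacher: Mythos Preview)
Your approach is genuinely different from the paper's. The paper parametrises by points $y$ on the inner arc $\bd C\cap\bd(L+p)$, shoots the outward normal ray to hit $\bd K$ at $x$, and applies the intermediate value theorem to the signed angle $\varphi(y)=\angle(u(L+p,y),u(K,x))$ to get existence; uniqueness is then argued by a somewhat intricate chord-and-arc-length comparison. You instead work with the support-function difference $\psi=h_K-h_{L+p}$ and the differential inequality $\psi''+\psi<0$, and your Sturm/Wronskian analysis for uniqueness is both correct and cleaner than the paper's: the comparison with $M\cos(\theta-\theta_0)$ confines $\{\psi>0\}$ to a half-circle, a second comparison with $\sin(\theta-\theta_\pm)$ forces connectedness, and strict concavity on $(\theta_-,\theta_+)$ gives a unique critical point there. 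Since any solution of the lemma has $t=\psi(\theta_0)\ge 0$ and $\psi'(\theta_0)=0$, this nails uniqueness.

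The existence side, however, has a real gap exactly where you flag it. Your sketch for $y_0\in K$ does not close. The inequality $M\le w_K(u(\theta_0))$ that you extract from $K\cap(L+p)\ne\emptyset$ bounds $M$ by the \emph{width} of $K$, but what you need is $M\le\ell$, the \emph{normal chord length} of $K$ at $x_0$, and $\ell<w_K(u(\theta_0))$ in general. The alternative route via $h_{K\cap(L+p)}(\theta_0)=h_{L+p}(\theta_0)$ is circular: that equality is \emph{equivalent} to $y_0\in K$ (it fails precisely when the supporting line of $L+p$ at $y_0$ misses $K$), so it cannot be used to prove it. Your final bookkeeping plan---showing the $(L+p)$-normal angles $\beta_1,\beta_2$ at $P_1,P_2$ lie in $(\theta_-,\theta_+)$ and that the normal range $J$ of the inner arc is the short sub-arc $[\beta_1,\beta_2]\subset(\theta_-,\theta_+)$---is correct (indeed $\psi(\beta_i)>0$ by transversality, and for $\psi(\theta)<0$ one has $\langle x(L+p,\theta),u(\theta)\rangle=h_{L+p}(\theta)>h_K(\theta)$, forcing $x(L+p,\theta)\notin K$), but you then still need $\theta_0\in J$, and the containment $K\subset L+p+t\,u(\theta_0)$ does not obviously deliver this. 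The paper avoids the whole issue by parametrising from the inner arc, where $y\in K$ is built into the setup; its IVT existence argument is two lines, and plugging it in would complete your proof, but as an independent argument yours does not yet establish existence.
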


\begin{proof}
We may assume, without loss of generality, that $p=0$.
The existence of $x_0$ and $t$ follows from the following standard continuity argument. Let $y_1$ and $y_2$ be the two intersection points of $\bd K$ and $\bd L$ in the positive direction on $\bd L$. For a point $y\in \bd C\cap\bd L$, there exists a unique $t\geq 0$ such that $x=y+tu(L,y)\in\bd C\cap\bd K$, that is, $x$ is the intersection point of the ray with end-point $y$, direction $u(L,y)$ and $\bd K$. It is clear that $x$ is strictly monotonically increasing with $y$.	Let $\varphi =\varphi (y)$ be the signed angle $u(L,y)$ and $u(K,x)$. Then $\varphi(y_1)<0$ and $\varphi(y_2)>0$, and $\varphi$ is a continuous, in fact, continuously differentiable function of $y$. Therefore, there is a $y_0$ such that $\varphi (y_0)=0$. Thus, $y_0$ and the corresponding $x_0$ and $t_0$ satisfy the statement of the lemma. 

Next, we prove the uniqueness of $y_0$. On the contrary, assume that there is another point, say $y_0'$ with the same property.

Let $x_0$ and $x_0'$ be the point on $\bd C\cap\bd K$ corresponding to $y_0$ and $y_0'$. First, note that 
\begin{equation}\label{chord}
d(y_0,y_0')<d(x_0,x_0').
\end{equation}
Clearly, due to the $C_+^2$ property of $\bd K$ and $\bd L$, the lines $x_0y_0$ and $x_0'y_0'$ intersect in a point, say $p$.
Let $\psi$ denote the angle of $u(L,y_0)$ and $u(L,y_0')$ (which is the same as the angle of $u(K,x_0)$ and $u(K,x_0')$). 

Due to the relative position of $K$ and $L$, $d(x_0,p)>d(y_0,p)$ and $d(x'_0,p)>d(y'_0,p)$, and all angles of the triangles $py_0y_0'$ and $px_0x_0'$ are non-obtuse (since the perpendiculars at $x_0,x_0'$, and at $y_0,y_0'$, are supporting lines of $K$, and  $L$, respectively). If $\angle py_0'y_0<\angle px_0'x_0$, then let $y_0''$ be the intersection point of $py_0$ and the line parallel to $y_0y_0'$ through $x_0'$. Then,  since $\angle py_0y_0'>\angle px_0x_0'$, it holds that $d(y_0',y_0)<d(x_0',y_0'')<d(x_0',x_0)$, proving \eqref{chord} in this case. The case when $\angle py_0'y_0>\angle px_0'x_0$ is similar.

Again, by the conditions on the curvatures of $\bd K$ and $\bd L$, the shorter open arc of the unit circle with end-points $y_0$ and $y_0'$ is in $K$ but outside of $L$. 
Therefore, its length $h$ is larger than the length $\Delta s$ of the arc of $\bd L$ from $y_0$ to $y_0'$. Similarly, the shorter closed unit circular arc with end-points $x_0$ and $x_0'$ is completely in $K$, and thus its length $h'$ is less than the length $\Delta s'$ of the arc of $\bd K$ from $x_0$ to $x_0'$. 
It follows from \eqref{chord} that $h<h'$.
In summary,
$$\Delta s< h<h'<\Delta s'.$$
On the other hand, if $I_K$ denotes the part of $\bd K$ between $x_0$ and $x_0'$, and $I_L$ denotes the part of  $\bd L$ between $y_0$ and $y_0'$, then it follows from the conditions on the curvatures of $\bd L$ and $\bd K$ that
	$$\Delta s=\int_{I_L}\dx s>\int_{I_L}\kappa_L(s)\dx s=\psi=\int_{I_K}\kappa_K(s')\dx s'>\int_{I_K}\dx s'=\Delta s',$$
which is a contradiction.
\end{proof}
This (unique) point $x_0=x(K,u)$ is usually called the {\em vertex} of $C$ and the corresponding $t$ is the {\em height}. Since the $L$-cap $C$ is uniquely determined by its vertex and height, we introduce the notation $C(u,t)$ to denote such a cap. This, in fact, provides a parametrization of $L$-caps in terms of a unit vector and a (sufficiently small) positive real number. Let $A(u,t)=A(C(u,t))$, and let $\ell(u,t)$ be the arc-length of $C\cap(\bd L+p)$.

\begin{lemma}\label{landAasym}
Let $K$ and $L$ be as above. Then, for a fixed $u\in S^1$, the following hold:
	\begin{align}
	\lim_{t\to 0^+}\ell(u,t)\cdot t^{-\frac 12}=&2\cdot\sqrt{\frac{2}{\kappa_K(u)-\kappa_L(u)}},\label{L-arc}\\
	\lim_{t\to 0^+}A(u,t)\cdot t^{-\frac 32}=&\frac 43\cdot\sqrt{\frac{2}{\kappa_K(u)-\kappa_L(u)}}.\label{eq:Aut}
	\end{align}
\end{lemma}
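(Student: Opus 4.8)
The plan is to reduce both asymptotics to one–variable calculus in a coordinate frame attached to the vertex of the cap. Fix $u\in S^1$ and choose Cartesian coordinates with origin at $x_0=x(K,u)$ and with $u=(0,1)$. Then $y_0=x_0-tu=(0,-t)$ by Lemma~\ref{capvertexexistsandunique}, the arcs of $\bd K$ near $x_0$ and of $\bd(L+p)$ near $y_0$ have horizontal tangent lines (their outer unit normals there both equal $u$), and $K$ and $L+p$ lie below these tangent lines. Since $\bd K$ and $\bd L$ are $C^2_+$, on a fixed neighbourhood of $x_0$ the curve $\bd K$ is the graph $y=h_K(\eta)=-\frac12\kappa_K(u)\eta^2+R_K(\eta)$ with $R_K(\eta)=o(\eta^2)$, and for all sufficiently small $t$ the relevant arc of $\bd(L+p)$ is the graph $y=-t+h_L(\eta)=-t-\frac12\kappa_L(u)\eta^2+R_L(\eta)$ with $R_L(\eta)=o(\eta^2)$, where the remainder $R_L$ does not depend on $t$ and the curvature of $\bd(L+p)$ at $y_0$ is exactly $\kappa_L(u)$ because $L+p$ is a translate of $L$ and $u(L+p,y_0)=u$. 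By the curvature condition \eqref{cuvr-cond} (as already observed in this section) the two graphs cross at exactly two points, whose $\eta$-coordinates I call $\pm\eta_0$ up to a factor $1+o(1)$.

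Next I would locate $\eta_0$: the equation $h_K(\eta)=-t+h_L(\eta)$ reads $t=\frac12\bigl(\kappa_K(u)-\kappa_L(u)\bigr)\eta^2+o(\eta^2)$, and since $\kappa_K(u)-\kappa_L(u)>0$ by \eqref{cuvr-cond} this gives
\[
\eta_0=\sqrt{\frac{2t}{\kappa_K(u)-\kappa_L(u)}}\,\bigl(1+o(1)\bigr)\qquad(t\to 0^+).
\]
For \eqref{L-arc}, note $\ell(u,t)=\int_{-\eta_0}^{\eta_0}\sqrt{1+h_L'(\eta)^2}\,\dx\eta$ and $h_L'(\eta)=O(\eta)=O(\sqrt t)$ on $[-\eta_0,\eta_0]$, so the integrand is $1+o(1)$ uniformly and $\ell(u,t)=2\eta_0\bigl(1+o(1)\bigr)$; substituting the value of $\eta_0$ yields the limit $2\sqrt{2/(\kappa_K(u)-\kappa_L(u))}$. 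For \eqref{eq:Aut},
\[
A(u,t)=\int_{-\eta_0}^{\eta_0}\Bigl(t-\tfrac12\bigl(\kappa_K(u)-\kappa_L(u)\bigr)\eta^2+o(\eta^2)\Bigr)\dx\eta=2t\eta_0-\tfrac13\bigl(\kappa_K(u)-\kappa_L(u)\bigr)\eta_0^3+o(t^{3/2}),
\]
and $\bigl(\kappa_K(u)-\kappa_L(u)\bigr)\eta_0^2=2t\bigl(1+o(1)\bigr)$ turns this into $2t\eta_0-\frac23 t\eta_0+o(t^{3/2})=\frac43 t\eta_0+o(t^{3/2})$, which equals $\frac43\sqrt{2/(\kappa_K(u)-\kappa_L(u))}\,t^{3/2}\bigl(1+o(1)\bigr)$. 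Equivalently, one may transcribe the proofs of Lemmas~4.1--4.2 of \cite{FKV14} almost verbatim, replacing the radius-$r$ circle by $L$; the computation is identical once these local expansions are available.

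The step requiring the most care is the control of the $o(\cdot)$ terms. One must check that $R_K$ and $R_L$ are $o(\eta^2)$ \emph{uniformly} as $\eta$ ranges over the shrinking interval $[-\eta_0,\eta_0]$ — this follows from the uniform continuity of the second derivatives of $\bd K$ near $x_0$ and of $\bd L$ near its point with outer normal $u$ — and that the implicit equation for $\eta_0$ has a unique small positive solution with the stated asymptotics; here the strict inequality $\kappa_K(u)>\kappa_L(u)$ from \eqref{cuvr-cond} is essential, since it makes $\eta_0$ real and keeps the two boundary arcs of $C(u,t)$ graphs over one common $\eta$-interval. One also uses, when passing from the integral over the true crossing interval to the integral over $[-\eta_0,\eta_0]$, that the integrand $t-\frac12\bigl(\kappa_K(u)-\kappa_L(u)\bigr)\eta^2+o(\eta^2)$ vanishes to first order at $\pm\eta_0$, so the endpoint corrections are of order $o(t^{3/2})$ and do not affect the leading term. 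Everything else is a routine Taylor expansion.
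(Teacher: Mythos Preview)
Your proposal is correct and follows essentially the same approach the paper sketches (and that \cite{FKV14}*{Lemma~4.2} carries out in the circular case): place the vertex of the cap at the origin, write $\bd K$ and $\bd(L+p)$ locally as graphs, Taylor-expand to second order, locate the crossing abscissae $\pm\eta_0$, and integrate. Your write-up is in fact more detailed than the paper's, which omits the calculation and simply refers to \cite{FKV14}; the only cosmetic difference is the orientation convention ($u=(0,1)$ versus the paper's $u=(0,-1)$).
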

The proof of Lemma~\ref{landAasym} is very similar to that of Lemma~4.2 in \cite[p. 906]{FKV14}, thus we omit the details. The main idea of the argument is that we assume that $x=(0,0)$ and $u=(0,-1)$. Then, in a sufficiently small open neighbourhood of the origin, $\bd K$ is the graph of a $C^2$ smooth convex function $f(x)$. Then we use the second order Taylor expansion of $f$ around the origin from which we obtain the statements of the lemma by simple integration.

For two points, $x,y\in K$ there are the two (unique) translates of $L$ such that each translate contains both $x$ and $y$ on its boundary.  We denote the $L$-caps determined by these translates of $L$ by $C_-(x,y)$ and $C_+(x,y)$ with the assumption that $A_-(x,y)=A(C_-(x,y))\leq A(C_+(x,y))=A_+(x,y)$.

\begin{lemma}\label{lemma:delta}
Let $K$ and $L$ be as above. Then there exists a constant $\delta>0$ such that for any $x_1, x_2\in K$, the $A_+(x_1,x_2)>\delta$. The constant $\delta$ depends only on $K$ and $L$. 
\end{lemma}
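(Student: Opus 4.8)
The plan is to argue by contradiction using a compactness argument. Suppose no such $\delta > 0$ exists. Then there is a sequence of pairs $(x_1^{(k)}, x_2^{(k)}) \in K \times K$ such that $A_+(x_1^{(k)}, x_2^{(k)}) \to 0$ as $k \to \infty$. Since $K \times K$ is compact, we may pass to a subsequence along which $x_1^{(k)} \to x_1^*$ and $x_2^{(k)} \to x_2^*$ for some $x_1^*, x_2^* \in K$. The first thing to check is continuity: I would show that $(x_1, x_2) \mapsto A_+(x_1, x_2)$ depends continuously on the pair of points, at least when $x_1 \neq x_2$. This follows because the two translates of $L$ passing through both $x_1$ and $x_2$ on their boundaries vary continuously with $(x_1, x_2)$ (here the $C^2_+$ assumption and condition \eqref{cuvr-cond}, which guarantee that $\bd K$ and any translate of $\bd L$ meet in exactly two points, keep the construction well-defined and the caps non-degenerate), and then $A_+$ is the larger of the two resulting cap areas, a continuous operation. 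Hence in the limit $A_+(x_1^*, x_2^*) = 0$, which forces $C_+(x_1^*, x_2^*)$ to have empty interior.

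The key step is then to rule this out. I would split into two cases. If $x_1^* \neq x_2^*$, then consider the translate $L + p^*$ realizing $C_+(x_1^*,x_2^*)$: it contains $x_1^*$ and $x_2^*$ on its boundary, and $C_+ = \cl(K \setminus (L+p^*))$. Because $A_- \le A_+$ and $A_+ = 0$, both caps $C_-$ and $C_+$ degenerate, meaning \emph{both} translates of $L$ through $x_1^*, x_2^*$ contain all of $K$. But two distinct translates of a strictly convex disc cannot both contain $K$ while both having $x_1^*$ and $x_2^*$ on their boundary unless $x_1^* = x_2^*$: the chord direction through $x_1^*, x_2^*$ would have to be a supporting direction of both translates on the same side, contradicting strict convexity (alternatively, one can note $[x_1^*,x_2^*]_L \subset K$ would have to be a single segment, impossible for strictly convex $L$ with $x_1^* \ne x_2^*$). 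If instead $x_1^* = x_2^*$, then the pair degenerates to a single point and the notion of $C_+$ must be examined as a limit; here I would instead go back to the sequence and extract quantitative control, using Lemma~\ref{landAasym}: near the diagonal, the two caps $C_\pm(x_1^{(k)},x_2^{(k)})$ each contain an $L$-cap $C(u,t)$ with height $t$ bounded below in terms of the chord length $d(x_1^{(k)},x_2^{(k)})$, via \eqref{eq:Aut} and \eqref{L-arc}, so their areas cannot both go to zero faster than a fixed power of the separation — and the separation itself is bounded below away from the diagonal, which is where the genuinely small case lives.

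The main obstacle I anticipate is handling the degenerate case $x_1^* = x_2^*$ cleanly: the map $A_+$ is continuous on the open set $\{x_1 \ne x_2\}$ but its behavior as the two points coalesce needs care, since $A_+ \to 0$ there trivially. The right fix is probably \emph{not} to take a limit at all in that case but to observe that the statement we want is only nontrivial when $x_1, x_2$ are separated: for $d(x_1,x_2) \ge \eta$ one gets a uniform bound $\delta(\eta) > 0$ by the compactness argument above applied to the compact set $\{(x_1,x_2) : d(x_1,x_2) \ge \eta\}$, and one checks this is in fact all that is needed downstream, or else one combines it with the local estimate from Lemma~\ref{landAasym} to cover $d(x_1,x_2) < \eta$ as well, giving a single $\delta$ valid for all pairs. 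I would write the final argument in the two-regime form: a compactness/continuity argument for well-separated points, and an explicit lower bound via the cap asymptotics of Lemma~\ref{landAasym} for nearby points, then take $\delta$ to be the minimum of the two constants obtained.
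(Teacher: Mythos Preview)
Your overall framework---compactness plus continuity---is exactly what the paper invokes (it omits the proof, citing Lemma~4.3 of \cite{FKV14} and saying only that it ``uses only the conditions on the curvatures of $\bd L$ and $\bd K$ and a simple compactness argument''). The case split $x_1^*\neq x_2^*$ is handled correctly: if both caps degenerate then both translates of $L$ are supporting discs of $K$ at $x_1^*\in\bd K$, contradicting uniqueness of the supporting disc (which follows from smoothness of $K$ and $L$).

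However, your treatment of the diagonal contains a real error. You write that ``$A_+\to 0$ there trivially'' as $x_1,x_2$ coalesce; this is false. It is $A_-$ that may tend to $0$ (precisely when the limit point lies on $\bd K$), while $A_+$ stays bounded away from $0$. Concretely: as $x_1\to x_2$ along a direction $v$, the two translates $L+p_\pm$ converge to the two translates of $L$ having $x_2$ on their boundary with tangent direction $v$ and opposite outward normals. At most one of these can contain $K$ (only if $x_2\in\bd K$ and $v$ is the tangent direction there, in which case it is the unique supporting disc); the other has its boundary passing through $K$ and curving the wrong way, so it cuts off a cap of definite positive area. Hence $\liminf A_+>0$ along any such sequence.

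Consequently your proposed fix via Lemma~\ref{landAasym} is aimed at the wrong object: that lemma gives the asymptotics of \emph{small} caps, i.e.\ of $A_-$, and yields no lower bound on $A_+$. The clean way to close the argument is to run the compactness not on $K\times K$ but on the space of translates: set $F(p)=A(\cl(K\setminus(L+p)))$ on the compact set $\{p:\bd(L+p)\cap K\neq\emptyset\}$, observe $F$ is continuous, and note that for any $x_1,x_2\in K$ at least one of the two relevant translates $L+p_\pm$ fails to contain $K$ (by the supporting-disc uniqueness argument you already gave), so $A_+(x_1,x_2)\ge\min\{F(p):K\not\subset L+p\}=:\delta>0$.
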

We also omit the proof of Lemma~\ref{lemma:delta} as it is essentially the same as that of Lemma~4.3 in \cite[p. 906]{FKV14}; it uses only the conditions on the curvatures of $\bd L$ and $\bd K$ and a simple compactness argument.

Finally, we need the existence of a rolling circle in $K$. We say that a circle of radius $\varrho>0$ rolls freely in $K$ if each $x\in\bd K$ is in a closed circular disc of radius $\varrho$ that is fully contained in $K$. It follows from Blaschke's result \cite{B56} that if $\bd K$ is  
$C^2$ smooth with the above conditions on its curvature, then there exists a circle of radius $0<\varrho<1$ (cf. \cite[p. 906]{FKV14} and \cite{Hug99}) that rolls freely in $K$. Thus, according to \eqref{eq:Aut} there exists a $0<t^*<\varrho$, such that for all $u\in S^1$
\begin{equation}\label{eq:gordulo}
A(u,t)\geq\frac 12\left(\frac 43 \sqrt{\frac{2}{1/\varrho-\kappa^*}}\right)t^{\frac 32},\quad \text{if }t\in[0,t^*],
\end{equation}
where $\kappa^*=\min_{u\in S^1} \kappa_L(u)$.

\section{Proof of the Theorem~\ref{fotetel}}\label{pf}
Our argument is essentially based on ideas that originated from R\'enyi and Sulanke \cite{RS63}, and which were also used in the spindle convex setting in \cite{FKV14}. Here we generalize and apply them to the $L$-convex probability model. 

A pair of random points $x_i, x_j$ forms an edge of $K_{(n)}$ if at least one of the $L$-caps $C_-(x_i, x_j)$ and $C_+(x_i, x_j)$ contains no other points of $x_1, \ldots, x_n$. Let $A_-(x,y)=A(C_-(x,y))$ and $A_+(x,y)=A(C_+(x,y))$. Then
\begin{multline}\label{eq:expectation}
\EE \big(f_0(K_{(n)})\big)=\frac{1}{A(K)^2}{n\choose 2}\int_{K}\int_K\left [\left (1-\frac{A_-(x_1, x_2)}{A(K)}\right )^{n-2}\right.\\
\left.+\left (1-\frac{A_+(x_1, x_2)}{A(K)}\right )^{n-2}\right ] \dx x_1\dx x_2,
\end{multline}
where integration is with respect to the Lebesgue measure in $\R^2$. 

Using a similar argument to the one in \cite[p. 907]{FKV14} one can show that the contribution of the second term of \eqref{eq:expectation} in the limit as $n\to\infty$ is negligible, in fact, it is exponentially small. For the sake of completeness, we give a detailed proof. For any fixed $\alpha\in \R$, it follows from Lemma~\ref{lemma:delta} that
\begin{align*}
\lim_{n\to\infty} &n^{\alpha}\frac{1}{A(K)^2}{n\choose 2}\int_{K}\int_K\left (1-\frac{A_+(x_1, x_2)}{A(K)}\right )^{n-2} \dx x_1\dx x_2\\
&\leq \lim_{n\to\infty} n^{\alpha}\frac{1}{A(K)^2}{n\choose 2}\int_{K}\int_K \left (1-\frac{\delta}{A(K)}\right )^{n-2}\dx x_1\dx x_2\\
&\leq \lim_{n\to\infty} n^{\alpha}\frac{1}{A(K)^2}{n\choose 2}\int_{K}\int_K e^{-\frac{\delta (n-2)}{A(K)}}\dx x_1\dx x_2\\
&=\lim_{n\to\infty} n^{\alpha}{n\choose 2}e^{-\frac{\delta (n-2)}{A(K)}}\\
&=0.
\end{align*}

Note that the same argument shows that the contribution of those pairs $x_1, x_2$ for which $A_-(x_1, x_2)>\delta$ is also negligible in the limit. Thus,
\begin{multline}\label{forintegral}
\lim_{n\to\infty} \EE \big(f_0(K_{(n)})\big)n^{-\frac 13}=\\
\lim_{n\to\infty} n^{-\frac 13}\frac{1}{A(K)^2}{n\choose 2}\int_{K}\int_K\left (1-\frac{A_-(x_1, x_2)}{A(K)}\right )^{n-2}{\bf 1}(A_-(x_1,x_2)<\delta) \dx x_1\dx x_2,
\end{multline}
where ${\bf 1}(\cdot)$ denotes the indicator function of an event.
In the rest of the proof we evaluate the right hand side of \eqref{forintegral}.

Let 
\begin{equation}\label{forjacobi}
    (x_1, x_2)=\Phi (u,t, u_1, u_2),
\end{equation}
where $u\in S^1$ and $t\leq t_0$ are such that $C(u,t)=C_-(x_1, x_2)$. Let $L(u,t)$ denote the arc $C(u,t)\cap(\bd L+x(K,u)-x(L,u)-tu)$. Thus $x_1, x_2\in L(u,t)$. 
The outer unit normals of $L+x(K,u)-x(L,u)-tu$ on the arc $L(u,t)$ determine a connected arc of $S^1$, which we denote by $L^*(u,t)$. 
Let $u_1, u_2$ be the outer unit normals of $L+x(K,u)-x(L,u)-tu$ at $x_1$ and $x_2$. Thus,
\begin{equation}\label{notation}
   x_i=x(K,u)-x(L,u)-tu+x(L,u_i), \quad i=1,2, 
\end{equation}
with $u_1, u_2\in L^*(u,t)$.

Lemma~\ref{capvertexexistsandunique} guarantees the uniqueness of the vertex and height of an $L$-cap, thus $\Phi$ is well-defined, bijective, and differentiable (see the Appendix) on a suitable domain of $(u,t,u_1,u_2)$ with the possible exception of a set of measure zero. The Jacobian of the transformation $\Phi$ is
\begin{equation}\label{jacobi}
  \left|J\Phi\right|=\frac{\left|u_1\times u_2\right|}{\kappa_L(u_1)\kappa_L(u_2)}\left(\frac{1}{\kappa_L(u)}-\frac{1}{\kappa_K(u)}+t\right),  
\end{equation}
see the details in the Appendix.
From \eqref{forintegral} and \eqref{jacobi}, we obtain
\begin{multline}\label{int:1}
\lim_{n\to\infty}\EE (f_0(K_{(n)}))n^{-\frac 13}\\=
\lim_{n\to\infty} n^{-\frac 13}\frac{1}{A(K)^2}{n\choose 2}\int_{S^1}\int_{0}^{t^*(u)}\int_{L^*(u,t)}\int_{L^*(u,t)}\left (1-\frac{A(u,t)}{A(K)}\right )^{n-2}\\
\times \frac{\left|u_1\times u_2\right|}{\kappa_L(u_1)\kappa_L(u_2)}\left(\frac{1}{\kappa_L(u)}-\frac{1}{\kappa_K(u)}+t\right) \dx u_1\dx u_2\dx t\dx u,
\end{multline}
with a suitable $t^*(u)$ depending only on $K$ and $L$. 

We note that in \eqref{int:1} we can replace $t^*(u)$ by any fixed $0<t_1\leq t^*(u)$ and the limit remains unchanged. We choose a suitable $0<t_1\leq t^*(u)$ such that $A(u,t)\geq \delta$ for all $t_1\leq t\leq t^*(u)$ and all $u\in S^1$. 

Now we split the domain of integration with respect to $t$ into two parts. Let $h(n)=(c\ln n/n)^{2/3}$, where $c$ is a suitable positive constant specified below in the proof. There exists $n_0\in \NN$, such that if $n>n_0$ then $h(n)<t_1$. Furthermore, there also exists $\gamma_1>0$ constant such that $A(u,t)>\gamma_1 h(n)^{3/2}$ for all $u\in S^1$ and $h(n)< t\leq t_1$. For $u\in S^1$ and $0\leq t\leq t_1$, let
	$$I^*(u,t)=\int_{L^*(u,t)}\int_{L^*(u,t)}
	\frac{\left|u_1\times u_2\right|}{\kappa_L(u_1)\kappa_L(u_2)} \dx u_1\dx u_2$$
	and
	$$k(u,t)=\frac{1}{\kappa_L(u)}-\frac{1}{\kappa_K(u)}+t.$$
\begin{lemma}\label{lemma:h(n)} Let $h(n)$ be defined as above.
	Then
	$$
	\lim_{n\to\infty} n^{-\frac 13}\frac{1}{A(K)^2}{n\choose 2}\int_{S^1}\int_{h(n)}^{t_1}\left (1-\frac{A(u,t)}{A(K)}\right )^{n-2}k(u,t)
	I^*(u,t)\dx t\dx u=0.
	$$
\end{lemma}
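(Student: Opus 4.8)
The plan is to show that the portion of the integral in \eqref{int:1} coming from ``tall'' caps (height $t$ between $h(n)$ and $t_1$) decays faster than any power of $n$, so that after multiplying by $n^{-1/3}\binom n2 \sim n^{5/3}$ the limit is still $0$. The key point is that on this range of $t$ the factor $\left(1-\frac{A(u,t)}{A(K)}\right)^{n-2}$ is exponentially small in $n$, while the remaining factors are harmless (bounded, or at worst polynomially controlled in $n$).

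First I would bound the three non-exponential factors uniformly. The function $k(u,t) = \frac{1}{\kappa_L(u)} - \frac{1}{\kappa_K(u)} + t$ is continuous and bounded on the compact set $S^1 \times [0,t_1]$, so $k(u,t) \le M_1$ for some constant $M_1$ depending only on $K$ and $L$. For $I^*(u,t)$, note that $L^*(u,t)$ is an arc of $S^1$ of length at most $2\pi$, and $\frac{|u_1 \times u_2|}{\kappa_L(u_1)\kappa_L(u_2)} \le \frac{1}{(\kappa^*)^2}$ where $\kappa^* = \min_{u \in S^1}\kappa_L(u) > 0$; hence $I^*(u,t) \le \frac{4\pi^2}{(\kappa^*)^2} =: M_2$. (In fact one could do better using that the arc $L^*(u,t)$ shrinks as $t \to 0$, but no such refinement is needed here.) Thus the integrand in the lemma is at most $M_1 M_2 \left(1-\frac{A(u,t)}{A(K)}\right)^{n-2}$.

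Next I would use the lower bound on the cap area on this range: as stated in the excerpt, there is a constant $\gamma_1 > 0$ with $A(u,t) > \gamma_1 h(n)^{3/2}$ for all $u \in S^1$ and $h(n) < t \le t_1$ (this follows from \eqref{eq:Aut}, or from \eqref{eq:gordulo}, and monotonicity of $A(u,\cdot)$). With $h(n) = (c \ln n / n)^{2/3}$ we get $h(n)^{3/2} = c \ln n / n$, so
$$
\left(1 - \frac{A(u,t)}{A(K)}\right)^{n-2} \le \exp\left(-\frac{(n-2)\,\gamma_1 c \ln n}{A(K)\, n}\right) \le n^{-\frac{\gamma_1 c}{2 A(K)}}
$$
for $n$ large enough (using $1 - x \le e^{-x}$ and $(n-2)/n \ge 1/2$). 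Integrating over $u \in S^1$ (length $2\pi$) and $t \in [h(n), t_1]$ (length at most $t_1$), the whole expression in the lemma is bounded by
$$
n^{-\frac 13}\, \frac{1}{A(K)^2}\binom n2 \cdot 2\pi\, t_1\, M_1 M_2 \cdot n^{-\frac{\gamma_1 c}{2 A(K)}} \le C\, n^{\frac 53 - \frac{\gamma_1 c}{2 A(K)}},
$$
and this tends to $0$ provided the constant $c$ is chosen large enough that $\frac{\gamma_1 c}{2A(K)} > \frac 53$, e.g. $c > \frac{10\,A(K)}{3\gamma_1}$. This fixes the constant $c$ in the definition of $h(n)$.

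The only genuine subtlety — and the step I would be most careful about — is making sure the constant $\gamma_1$ really is uniform in $u \in S^1$, i.e. that the $t^{-3/2}$-asymptotics of $A(u,t)$ in \eqref{eq:Aut} hold uniformly in the direction $u$. This is where the curvature hypothesis \eqref{cuvr-cond} and the $C^2_+$ smoothness of both $\bd K$ and $\bd L$ enter: they guarantee that $\kappa_K(u) - \kappa_L(u)$ is bounded below by a positive constant on the compact $S^1$, and that the error terms in the relevant second-order Taylor expansions are uniformly controlled; alternatively one can simply invoke \eqref{eq:gordulo}, which already gives a uniform lower bound $A(u,t) \ge c_0 t^{3/2}$ for $t \in [0,t^*]$ with $c_0$ independent of $u$, and then $\gamma_1 = c_0$ works directly since $h(n) \le t \le t_1 \le t^*$. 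Everything else is the elementary estimate $1 - x \le e^{-x}$ together with $\binom n2 \le n^2/2$ and the explicit choice of $c$.
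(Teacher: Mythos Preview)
Your proposal is correct and follows essentially the same approach as the paper's own proof: bound $k(u,t)I^*(u,t)$ by a constant on $S^1\times[0,t_1]$, use the uniform lower bound $A(u,t)>\gamma_1 h(n)^{3/2}$ on the range $h(n)<t\le t_1$ to make $(1-A(u,t)/A(K))^{n-2}$ decay like a negative power of $n$, and then choose $c$ large enough to beat the $n^{5/3}$ prefactor. The only cosmetic differences are that the paper bounds the product $k(u,t)I^*(u,t)$ by a single constant $\gamma_2$ rather than bounding the two factors separately, and it does not insert the factor $\tfrac12$ from $(n-2)/n$, so its threshold is $c>5A(K)/(3\gamma_1)$ rather than your $c>10A(K)/(3\gamma_1)$; this is immaterial since $c$ is free. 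Your explicit discussion of the uniformity of $\gamma_1$ via \eqref{eq:gordulo} is a welcome clarification that the paper leaves implicit.
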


\begin{proof}Note that there exists a universal constant $\gamma_2>0$ such that
	$$
	k(u,t) I^*(u,t)\leq \gamma_2$$
	for all $u\in S^1$ and $0< t\leq t_1$. Hence, for a fixed $u\in S^1$ and any $n>n_0$, it holds
	\begin{align*}
	\int_{h(n)}^{t_1}&\left (1-\frac{A(u,t)}{A(K)}\right )^{n-2}
	k(u,t) I^*(u,t)\dx t\\
	&\ll \int_{h(n)}^{t_1}\left (1-\frac{A(u,t)}{A(K)}\right )^{n-2}\dx t\\
	&\ll \int_{h(n)}^{t_1}\left (1-\frac{\gamma_1h(n)^{\frac 32}}{A(K)}\right )^{n-2}\dx t\\
	&\ll \int_{0}^{t_1}\left (1-\frac{\gamma_1 c (\ln n/n)}{A(K)}\right )^{n-2}\dx t\\ 
	&\ll n^{-\frac{\gamma_1 c}{A(K)}}.
	\end{align*}
	If $c>5A(K)/(3\gamma_1)$, then
	\begin{align*}
	n^{-\frac 13}&\frac{1}{A(K)^2}{n\choose 2}\int_{S^1}\int_{h(n)}^{t_1}
	\left (1-\frac{A(u,t)}{A(K)}\right )^{n-2}
	k(u,t)I^*(u,t)\dx t\dx u
	&\ll n^{\frac 53}n^{-\frac{\gamma_1 c}{A(K)}},
	\end{align*}
	which clearly converges to $0$ as $n\to\infty$.
\end{proof}

Let $\varepsilon>0$ be fixed. There exists a $0<t_\varepsilon<t_1$ such that for all $0<t<t_\varepsilon$, $u\in S^1$ and for any $u_1, u_2\in L^*(u,t)$
\begin{equation*}
(1-\varepsilon)\frac{1}{\kappa^2_L(u)}<\frac{1}{\kappa_L(u_1)\kappa_L(u_2)}< (1+\varepsilon)\frac{1}{\kappa^2_L(u)}.
\end{equation*}
Then, with the notation 
$$I(u,t)=\int_{L^*(u,t)}\int_{L^*(u,t)}
\left|u_1\times u_2\right| \dx u_1\dx u_2,$$
we obtain
$$\frac{1-\varepsilon}{\kappa^2_L(u)}I(u,t)
<I^*(u,t)
<\frac{1+\varepsilon}{\kappa^2_L(u)}I(u,t)
.$$
And
\begin{equation*}
I(u,t)=2(\ell^*(u,t)-\sin\ell^* (u,t)),
\end{equation*}
where $\ell^*(u,t)$ is the length of the arc $L^*(u,t)\subset S^1$. Thus
\begin{equation}\label{kell}
I^*(u,t)=
(1+O(\varepsilon))\frac{2(\ell^*(u,t)-\sin\ell^* (u,t))}{\kappa^2_L(u)}.
\end{equation}

Integrating in \eqref{int:1} with respect to $u_1$ and $u_2$, and using Lemma~\ref{lemma:h(n)}, we obtain
\begin{multline*}
\lim_{n\to\infty}\EE (f_0(K_{(n)}))n^{-\frac 13}=(1+O(\varepsilon))
\lim_{n\to\infty} n^{-\frac 13}\frac{2}{A(K)^2}{n\choose 2}\\
\times\int_{S^1}\int_{0}^{h(n)}\left (1-\frac{A(u,t)}{A(K)}\right )^{n-2}
k(u,t)\frac{\ell^*(u,t)-\sin\ell^* (u,t)}{\kappa^2_L(u)}\dx t\dx u.
\end{multline*}

Let $n_1$ be such that $0<h(n)<t_\varepsilon$ if $n>n_1$. Now assume, that $n>\max\left\{n_0,n_1\right\}$ and define 
\begin{equation}\label{eq:thetan}
\theta_n(u)= n^{-\frac 13}{n\choose 2}\int_{0}^{h(n)}\left (1-\frac{A(u,t)}{A(K)}\right )^{n-2}
k(u,t) \frac{\ell^*(u,t)-\sin\ell^* (u,t)}{\kappa^2_L(u)}\dx t.
\end{equation}
Then
\begin{equation}\label{eq:Lebesgue}
\lim_{n\to\infty}\EE (f_0(K_{(n)}))n^{-\frac 13}\\=(1+O(\varepsilon))
\lim_{n\to\infty}\frac{2}{A(K)^2}\int_{S^1}\theta_n(u)\dx u.
\end{equation}
We recall from  \cite[(11), p. 2290]{BFRV09} that, for any $\beta\geq0, \omega>0$, and $\alpha>0$ it holds that
\begin{equation}\label{formula}
\int_{0}^{g(n)}t^{\beta}\left(1-\omega t^{\alpha}\right)^n\dx t \sim \frac{1}{\alpha\omega^{\frac{\beta+1}{\alpha}}}\Gamma\left(\frac{\beta+1}{\alpha}\right)n^{-\frac{\beta+1}{\alpha}},
\end{equation}
as $n\to\infty$, assuming that
$$\left(\frac{(\beta+\alpha+1)\ln n}{\alpha\omega n}\right)^{\frac{1}{\alpha}}<g(n)<\omega^{-\frac{1}{\alpha}}$$
for sufficiently large $n$.

In order to use Lebesgue's dominated convergence theorem for \eqref{eq:Lebesgue}, we need to show that the functions $\theta_n(u)$ are uniformly bounded on $S^1$. Clearly, there exists a $\gamma_3>0$ constant such that for all $0<t<t_\varepsilon$ and $u\in S^1$ we have $$\frac{\ell^*(u,t)-\sin\ell^* (u,t)}{\kappa^2_L(u)}<\gamma_3 t^{\frac 32},$$
and
$$k(u,t)=\frac{1}{\kappa_L(u)}-\frac{1}{\kappa_K(u)}+t<\gamma_4,$$
for a suitable $\gamma_4>0$ constant. From \eqref{formula} and \eqref{eq:thetan} with
\begin{align}
\alpha=\frac 32,\quad \beta=\frac 32,\quad \omega=\frac{\frac 23\sqrt{\frac{2}{1/\varrho-\kappa^*}}}{A(K)},
\end{align}
where $\kappa^*$ is the minimum of $\kappa_L(u)$ for $u\in S^1$, and $\varrho$ is the radius of the rolling circle of $K$ as in \eqref{eq:gordulo}, it follows that there exists $\gamma_5>0$ such that $\theta_n(u)<\gamma_5$ for all $u\in S^1$ and sufficiently large $n$. Thus, by Lebesgue's dominated convergence theorem
\begin{equation}
\lim_{n\to\infty}\EE (f_0(K_{(n)}))n^{-\frac 13}\\=(1+O(\varepsilon))
\frac{2}{A(K)^2}\int_{S^1}\lim_{n\to\infty}\theta_n(u)\dx u.
\end{equation}

Now assume that $0<t_\varepsilon$ is so small, that the following two conditions also hold for all $0<t<t_\varepsilon$ and $u\in S^1$
\begin{equation}\label{areaut}
(1-\varepsilon)\frac 43\sqrt{\frac{2}{\kappa_K(u)-\kappa_L(u)}}t^{\frac 32}<A(u,t)<(1+\varepsilon)\frac 43\sqrt{\frac{2}{\kappa_K(u)-\kappa_L(u)}}t^{\frac 32},
\end{equation}
and
\begin{align*}
(1-\varepsilon)\frac 43 \left (\frac{2}{\kappa_K(u)-\kappa_L(u)}\right )^{\frac 32}t^{\frac 32}<\frac{\ell^3 (u,t)}{6}
&<(1+\varepsilon)\frac 43 \left (\frac{2}{\kappa_K(u)-\kappa_L(u)}\right )^{\frac 32}t^{\frac 32},
\end{align*}
as a result of Lemma~\ref{landAasym}. Using the Taylor series expansion of $\sin x$ around $0$, and the fact that $\lim_{t\to 0^+}\ell^*(u,t)/\ell(u,t)=\kappa_L(u)$, we obtain that for a sufficiently small $t_\varepsilon>0$ it holds that
\begin{align*}
\frac{\ell^*(u,t)-\sin\ell^* (u,t)}{\kappa^2_L(u)}&=\frac{\left(\ell^*(u,t)\right)^3}{6\kappa^2_L(u)}+O\left((\ell^*(u,t))^5\right)\\
&=(1+O(\varepsilon))\kappa_L(u)
\frac{\ell^3 (u,t)}{6}
.
\end{align*}
Thus, we obtain
\begin{equation}\label{eq:lcsillag}
	\frac{\ell^*(u,t)-\sin\ell^* (u,t)}{\kappa^2_L(u)}=(1+O(\varepsilon))\kappa_L(u)\frac 43 \left (\frac{2}{\kappa_K(u)-\kappa_L(u)}\right )^{\frac 32}t^{\frac 32}.
\end{equation}
From \eqref{areaut} and \eqref{eq:lcsillag} it follows that
\begin{multline*}
\lim_{n\to\infty}\EE (f_0(K_{(n)}))n^{-\frac 13}\\=(1+O(\varepsilon))
\frac{2}{A(K)^2}\int_{S^1}\lim_{n\to\infty} n^{-\frac 13}{n\choose 2}\int_{0}^{h(n)}\left (1-\frac{\frac 43\sqrt{\frac{2}{\kappa_K(u)-\kappa_L(u)}}}{A(K)}t^{\frac 32}\right )^{n-2}\\\times
k(u,t)\kappa_L(u)\frac 43 \left (\frac{2}{\kappa_K(u)-\kappa_L(u)}\right )^{\frac 32}t^{\frac 32}\dx t\dx u.
\end{multline*}
Therefore,
\begin{multline*}
\lim_{n\to\infty}\EE (f_0(K_{(n)}))n^{-\frac 13}=(1+O(\varepsilon))
\frac{4}{3A(K)^2}
\int_{S^1}\left (\frac{2}{\kappa_K(u)-\kappa_L(u)}\right )^{\frac 32}\kappa_L(u)\\\times\lim_{n\to\infty} n^{\frac 53}\int_{0}^{h(n)} t^{\frac 32}\left (1-\frac{\frac 43\sqrt{\frac{2}{\kappa_K(u)-\kappa_L(u)}}}{A(K)}t^{\frac 32}\right )^{n-2}k(u,t)
\dx t\dx u.\notag
\end{multline*}
Now, the substitution
$\alpha=\beta=3/2$ and $\omega=(4\sqrt 2/(3A(K)))(\kappa_K(u)-\kappa_L(u))^{-1/2}$
yields
\begin{align}
\lim_{n\to\infty}\EE (f_0(K_{(n)}))n^{-\frac 13}=(1+O(\varepsilon))
\frac{4}{3A(K)^2}
\int_{S^1}\left (\frac{2}{\kappa_K(u)-\kappa_L(u)}\right )^{\frac 32}\kappa_L(u)\notag\\\times\Bigg[\lim_{n\to\infty} n^{\frac 53}\int_{0}^{h(n)} t^{\beta}\left (1-\omega t^{\alpha}\right )^{n-2}\left(\frac{1}{\kappa_L(u)}-\frac{1}{\kappa_K(u)}\right)
\dx t\label{eq:elotag}\\
+\left.\lim_{n\to\infty} n^{\frac 53}\int_{0}^{h(n)} t^{\beta+1}\left (1-\omega t^{\alpha}\right )^{n-2}\dx t\right]\dx u.\label{eq:tag}
\end{align}
It follows from the asymptotic formula \eqref{formula} that the term \eqref{eq:tag} is $0$.
Applying \eqref{formula} to \eqref{eq:elotag}, we obtain
\begin{multline*}
\lim_{n\to\infty}\EE (f_0(K_{(n)}))n^{-\frac 13}=(1+O(\varepsilon))
\frac{2^{\frac 72}}{3A(K)^2}
\int_{S^1}\left (\frac{1}{\kappa_K(u)-\kappa_L(u)}\right )^{\frac 32}\\\times \frac{\kappa_K(u)-\kappa_L(u)}{\kappa_K(u)} n^{\frac 53}\frac 23 \left(\frac{\frac 43\sqrt{\frac{2}{\kappa_K(u)-\kappa_L(u)}}}{A(K)}\right)^{-\frac 53}\Gamma\left(\frac 53\right)n^{-\frac 53}\dx u.
\end{multline*}

After simplification, we get
\begin{equation*}
\lim_{n\to\infty}\EE (f_0(K_{(n)}))n^{-\frac 13}=(1+O(\varepsilon))\sqrt[3]{\frac{2}{3A(K)}}\Gamma\left(\frac 53\right)\int_{S^1}\frac{\left(\kappa_K(u)-\kappa_L(u)\right)^{\frac 13}}{\kappa_K(u)}\dx u.
\end{equation*}
Since $\varepsilon>0$ was arbitrary, this completes the proof of Theorem~\ref{fotetel}. 

\begin{remark}
If $K\subset\R^2$ is a convex disc, then we denote integration on $\bd K$ with respect to the arc-length by $\int_{\bd K}\ldots \dx x$. It is well-known that if $\bd K$ is $C^2_+$, then for any measurable function $f(u)$ on  $S^1$ it holds that 
\begin{equation}\label{eq:arc-length}
\int_{S^1}f(u)\dx u=\int_{\bd K}f(u(K,x))\kappa_K(x)\dx x,
\end{equation}
see, for example, \cite[(2.5.30)]{Schneider}. With the help of \eqref{eq:arc-length}, the statement of Theorem~\ref{fotetel} can also be phrased slightly differently in the form
\begin{equation*}
\lim_{n\to\infty}\EE (f_0(K_{(n)}))n^{-\frac 13}=\sqrt[3]{\frac{2}{3A(K)}}\Gamma\left(\frac 53\right)\int_{\bd K}\left(\kappa_K(x)-\kappa_L(x)\right)^{\frac 13}\dx x.
\end{equation*}
\end{remark}

\section{The $K=L$ case} 

In this section we investigate the case when $K=L$. This is a direct generalization of Theorem 1.3 in \cite{FKV14}.

Now we turn to the proof of Theorem~\ref{LL}. Since the argument closely follows the proof of Theorem~\ref{fotetel} in Section~\ref{pf}, we only point out the major differences in the calculations.  First we note that the analogue of Lemma~\ref{capvertexexistsandunique} is true in the case $K=L$ as well. A general cap is of the form $C=\cl (L\setminus (L+p))$, and it clearly follows  that the vertex $x_0$ of the cap is the unique point in $\bd L$ where the unit outer normal is $-p/|p|$, while the height is $t=|p|$. We are going to use the notion $C(u,t)$, $A(u,t)$, etc. as before with the assumption that $K=L$. Next we need a variant of Lemma~\ref{landAasym}.

\begin{lemma}\label{lemma:ellAmod}
Let $L$ be a convex disc with $C^2_+$ boundary. Then
\begin{align}
	\lim_{t\to 0^+}\ell^*(u,t) =& \pi,\label{L-arcmod}\\
	\lim_{t\to 0^+}A(u,t)\cdot t^{-1}=& w(u) .\label{eq:Autmod}
	\end{align}
\end{lemma}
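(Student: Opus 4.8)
The plan is to set up local coordinates exactly as in the proof of Lemma~\ref{landAasym}, but to keep track of the fact that now $K=L$, so the ``lower'' boundary of the cap is a translate of the same curve as the ``upper'' boundary. Fix $u\in S^1$ and translate and rotate so that the vertex of the cap $C(u,t)$ is at the origin, $u=(0,-1)$, and in a neighbourhood of the origin $\bd L$ is the graph of a $C^2_+$ convex function $f$ with $f(0)=0$, $f'(0)=0$, $f''(0)=\kappa_L(u)=:\kappa$. The cap $C(u,t)=\cl(L\setminus(L+p))$ with $p=tu=(0,-t)$: the translate $L+p$ is $L$ shifted straight down by $t$, so near the origin its boundary is the graph of $g(x)=f(x)-t$. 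Then $C(u,t)$ is the region between $y=f(x)$ (above) and $y=f(x)-t$ (below), cut off at the two points where these graphs meet $\bd L$ and $\bd(L+p)$ respectively — but in fact the relevant ``width'' of the cap in the $x$-direction is governed by where the chord structure closes up, and I expect the endpoints $x_\pm(t)$ of the arc $L(u,t)$ to satisfy $x_\pm(t)\to \pm a$ for some fixed $a>0$ as $t\to 0^+$, NOT $x_\pm(t)\to 0$. This is the key qualitative difference from the $C^2_+$-with-curvature-gap case, and it is the source of the finite (rather than blowing-up) limits.

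\textbf{Identifying the endpoints.} The arc $L(u,t)=C(u,t)\cap\bd(L+p)$ runs between the two points of $\bd(L+p)$ that also lie on $\bd L$; equivalently, between the two points where $\bd L$ and $\bd L+(0,-t)$ cross. Write $\bd L$ globally via its support function or simply observe: a point $x(L,v)$ on $\bd L$ lies on $\bd(L+(0,-t))$ iff $x(L,v)+(0,t)\in\bd L$. As $t\to 0^+$, the two solutions $v=v_\pm(t)\in S^1$ converge to the two directions where the vertical segment of length $t$ can be a chord direction in the limit, i.e. to $v=\pm$(the directions perpendicular to... )\,; more cleanly, the outer normals at the endpoints converge to the two unit vectors $u_1^0,u_2^0\in S^1$ with $\langle u_i^0,u\rangle=0$, namely $u_1^0=u^\perp$, $u_2^0=-u^\perp$. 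Hence the normal arc $L^*(u,t)$ — the set of outer normals of $L+p$ along $L(u,t)$ — converges to the half-circle from $u^\perp$ through $u$ to $-u^\perp$ (the endpoints approach antipodal points), so $\lim_{t\to0^+}\ell^*(u,t)=\pi$, which is \eqref{L-arcmod}. I would make this rigorous by an implicit-function/continuity argument on the map $t\mapsto v_\pm(t)$, using $C^2_+$ (strict convexity gives uniqueness of the two crossing points for small $t>0$, and smoothness gives the limit).

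\textbf{The area asymptotics.} For \eqref{eq:Autmod}, note $A(u,t)=A(C(u,t))=A\big(L\setminus(L+p)\big)$ with $p=tu$. By inclusion–exclusion and translation-invariance of area, $A(L\setminus(L+p))=A(L)-A(L\cap(L+p))$. Now use the standard first-order expansion of the area of the intersection of a convex body with a small translate of itself: by Steiner-type / mixed-area considerations (or directly: $\frac{d}{dt}A(L\cap(L+tu))\big|_{t=0^+}$ equals minus the length of the projection of $L$ onto $u^\perp$, i.e. minus the width $w(u)$), one gets $A(L\cap(L+tu))=A(L)-t\,w(u)+o(t)$, hence $A(u,t)=t\,w(u)+o(t)$, which is \eqref{eq:Autmod}. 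Alternatively, and more in the spirit of Lemma~\ref{landAasym}, integrate the vertical gap: $A(u,t)=\int t\,\mathbf 1\{(x,y)\in C(u,t)\}\,\dots$ reduces to $\int_{x_-(t)}^{x_+(t)} (\text{vertical extent})\,\dx x$; since the cap is, up to lower order, a vertical translate of $L$ by $t$ restricted to the slab $[x_-(t),x_+(t)]$ and $x_\pm(t)\to\pm a$ with $2a\to$ the $u^\perp$-extent of $L$, the integral is $t\cdot(\text{width}) + o(t)$. I would present the clean inclusion–exclusion version as the main line and remark that it also follows from the Taylor-expansion method of Lemma~\ref{landAasym}.

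\textbf{Main obstacle.} The only genuinely nontrivial point is controlling the endpoints of the arc $L(u,t)$ and showing the normal arc $L^*(u,t)$ really fills out a half-circle in the limit — i.e. that the two crossing points of $\bd L$ and $\bd(L+tu)$ have outer normals tending to $\pm u^\perp$. Everything else is a soft area computation. I expect $C^2_+$ (strict convexity plus $C^2$ regularity) to be exactly what is needed: strict convexity guarantees exactly two crossing points for all small $t>0$, and the $C^2$ bound on curvature makes the convergence of their normals to the antipodal pair $\pm u^\perp$ uniform in $u$, which is what is later needed when integrating over $S^1$ in the proof of Theorem~\ref{LL}.
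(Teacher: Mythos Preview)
Your approach is correct and rests on the same geometric observation the paper uses: the two intersection points of $\bd L$ and $\bd(L+p)$ tend, as $t\to 0^+$, to the tangency points of the supporting lines $l_1(u), l_2(u)$ parallel to $u$, i.e.\ to the boundary points with outer normals $\pm u^\perp$; both \eqref{L-arcmod} and \eqref{eq:Autmod} then drop out. The paper's own proof is a one-sentence sketch of exactly this, so your write-up is simply a fleshed-out version of it, with the inclusion--exclusion derivation $A(u,t)=A(L)-A(L\cap(L+p))=t\,w(u)+o(t)$ as a clean alternative to direct integration.

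One small correction: with the paper's convention the cap $C=\cl(L\setminus(L+p))$ has its vertex at the boundary point with outer normal $-p/|p|$, so to place the vertex at outer normal $u$ you need $p=-tu$, not $p=tu$. In your coordinates ($u=(0,-1)$), taking $p=(0,-t)$ shifts $L$ \emph{down} and produces a cap at the \emph{top} of $L$, with vertex normal $(0,1)=-u$. Flipping the sign fixes the setup without changing any of the subsequent reasoning.
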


The proof of the Lemma~\ref{lemma:ellAmod} is simple, as the intersection points of $\bd L$ and $\bd L-tu$ tend to the points of tangency of the supporting lines $l_1(u)$ and $l_2(u)$ as $t\to 0+$.

We use the reparametrization $\Phi$ as introduced in (\ref{forjacobi}), and have that $$|J\Phi|=\frac{|u_1\times u_2|}{\kappa_L(u_1)\kappa_L(u_2)}t.$$ After the integral transformation we obtain
\begin{multline*}
\lim_{n\to\infty}\EE (f_0(L_{(n)}))\\=
\lim_{n\to\infty} \frac{1}{A(L)^2}{n\choose 2}\int_{S^1}\int_{0}^{t^*(u)}\int_{L^*(u,t)}\int_{L^*(u,t)}\left (1-\frac{A(u,t)}{A(L)}\right )^{n-2}\\
\times \frac{\left|u_1\times u_2\right|}{\kappa_L(u_1)\kappa_L(u_2)}t \dx u_1\dx u_2\dx t\dx u.
\end{multline*}

In the next step, as in Lemma~\ref{lemma:h(n)}, we split the domain of integration in $t$. From (\ref{eq:Autmod}) it follows that there is a universal constant $\hat c=\hat c(L)$ such that $A(u,t)>\hat ct$. We set $h(n)=c \ln n/n$ (where $c$ is a constant to be specified later), and obtain
\begin{multline*}
\lim_{n\to\infty}\EE (f_0(L_{(n)}))=(1+O(\varepsilon))
\lim_{n\to\infty} \frac{2}{A(L)^2}{n\choose 2}\int_{S^1}\int_{0}^{h(n)}\left (1-\frac{A(u,t)}{A(L)}\right )^{n-2}t \\ \times
\frac{\ell^*(u,t)-\sin\ell^* (u,t)}{\kappa^2_L(u)}\dx t\dx u,
\end{multline*}
where $\varepsilon>0$ is fixed as before, and we used (\ref{kell}).

Now, the inner integral is clearly uniformly bounded for $u\in S^1$, so we may use Lebesgue's dominated convergence theorem, hence
\begin{align*}
\lim_{n\to\infty}\EE (f_0(L_{(n)}))&=(1+O(\varepsilon))
 \frac{2}{A(L)^2} \int_{S^1} \lim_{n\to\infty}{n\choose 2}\int_{0}^{h(n)}\left (1-\frac{A(u,t)}{A(L)}\right )^{n-2}t \\ &\quad\quad\quad\times
\frac{\ell^*(u,t)-\sin\ell^* (u,t)}{\kappa^2_L(u)}\dx t\dx u\\
&=(1+O(\varepsilon))\pi\int_{S^1} \frac{1}{\kappa^2_L(u)\cdot w^2(u)} \dx u,
\end{align*}

where in the second step we used Lemma~\ref{lemma:ellAmod} and (\ref{formula}) (with a sufficiently large $c$). Since $\varepsilon>0$ was arbitrary, the proof of Theorem~\ref{LL} is now complete.

\section{Appendix}
In this section we calculate the Jacobian of the transformation $\Phi$ defined in \eqref{forjacobi}. We note that this Jacobian was already known to Santal\'o \cite{S46} in the special case when $L$ is circle of radius $r$. The calculation for $L=rS^1$ is also described in \cite{FKV14}*{Appendix A, pp. 916--917}. A $d$-dimensional generalization of the map $\Phi$ and its Jacobian was determined in \cite{F19}*{Lemma~2.2} for the case when $L=rB^d$. 

Let $\phi,\phi_1$ and $\phi_2$ be chosen such that the outer normals $u=(\cos\phi,\sin\phi)$, $u_i=(\cos\phi_i,\sin\phi_i)$, $i=1,2$. Then $\dx u\dx u_1\dx u_2=\dx \phi\dx \phi_1\dx \phi_2$. Furthermore, let $x(K,\phi)=x(K,u)$, $\kappa_K(\phi)=\kappa_K(x(K,\phi))$, and $x(L,\phi)=x(L,u)$, $\kappa_L(\phi)=\kappa_L(x(L,\phi))$.

Let $r_K:[0,2\pi)\to\bd K$ be a parametrization of $\bd K$ such that the outer unit normal $u(K, r_K(\phi))=(\cos\phi,\sin\phi)$. This parametrization is well-defined, and bijective due to the $C^2_+$ property of $\bd K$. Similarly, let $r_L:[0,2\pi)\to\bd L$ be a parametrization of $\bd L$ such that $u(L,r_L(\phi))=(\cos\phi, \sin \phi)$. Then $r_L$ is also well-defined and bijective, since $\bd L$ is also $C^2_+$. 

With the notation $x_i=(x_{i1}, x_{i2})$, $i=1,2$ for the Cartesian coordinates of $x_1$ and $x_2$, the Jacobian is 
$$J\Phi=\begin{vmatrix}
\frac{\partial x_{11}}{\partial \phi} & \frac{\partial x_{12}}{\partial \phi} & \frac{\partial x_{21}}{\partial \phi} & \frac{\partial x_{22}}{\partial \phi}\\[3pt] 
\frac{\partial x_{11}}{\partial t} & \frac{\partial x_{12}}{\partial t} & \frac{\partial x_{21}}{\partial t} & \frac{\partial x_{22}}{\partial t}\\[3pt] 
\frac{\partial x_{11}}{\partial \phi_1} & \frac{\partial x_{12}}{\partial \phi_1} &0 & 0\\[3pt] 
0 & 0 & \frac{\partial x_{21}}{\partial \phi_2} & \frac{\partial x_{22}}{\partial \phi_2}
\end{vmatrix},$$
where $$\frac{\partial x_{11}}{\partial t}=\frac{\partial x_{21}}{\partial t}=-\cos\phi, \quad\frac{\partial x_{12}}{\partial t}=\frac{\partial x_{22}}{\partial t}=-\sin\phi.$$
Note that  the special choice of $r_K(\phi)$ and $r_L(\phi)$ yields $\dx\phi=\kappa_K(\phi)\dx r_K$, $\dx\phi=\kappa_L(\phi)\dx r_L$ and $\dx\phi_i=\kappa_L(\phi_i)\dx r_L$ for $i=1,2$, thus

\begin{align*}
\frac{\partial x_{11}}{\partial \phi}&=\frac{\partial x_{21}}{\partial \phi}=\frac{-\sin\phi}{\kappa_K(\phi)}-\frac{-\sin\phi}{\kappa_L(\phi)}+t\sin\phi,\\
\frac{\partial x_{12}}{\partial \phi}&=\frac{\partial x_{22}}{\partial \phi}=\frac{\cos\phi}{\kappa_K(\phi)}-\frac{\cos\phi}{\kappa_L(\phi)}-t\cos\phi,\\
\frac{\partial x_{11}}{\partial \phi_1}&=\frac{-\sin\phi_1}{\kappa_L(\phi_1)},\quad\frac{\partial x_{12}}{\partial \phi_1}=\frac{\cos\phi_1}{\kappa_L(\phi_1)},\\
\frac{\partial x_{21}}{\partial \phi_2}&=\frac{-\sin\phi_2}{\kappa_L(\phi_2)},\quad\frac{\partial x_{22}}{\partial \phi_2}=\frac{\cos\phi_2}{\kappa_L(\phi_2)}.
\end{align*}

Now we can compute the determinant
\begin{align*}
J\Phi=&-\frac{\partial x_{21}}{\partial \phi_2} \begin{vmatrix}
\frac{\partial x_{11}}{\partial \phi} & \frac{\partial x_{12}}{\partial \phi} & \frac{\partial x_{22}}{\partial \phi}\\[3pt] 
\frac{\partial x_{11}}{\partial t} & \frac{\partial x_{12}}{\partial t} & \frac{\partial x_{22}}{\partial t}\\[3pt] 
\frac{\partial x_{11}}{\partial \phi_1} & \frac{\partial x_{12}}{\partial \phi_1}&0
\end{vmatrix}+\frac{\partial x_{22}}{\partial \phi_2}\begin{vmatrix}
\frac{\partial x_{11}}{\partial \phi}  & \frac{\partial x_{12}}{\partial \phi} & \frac{\partial x_{21}}{\partial \phi}\\[3pt] 
\frac{\partial x_{11}}{\partial t} & \frac{\partial x_{12}}{\partial t} & \frac{\partial x_{21}}{\partial t}\\[3pt] 
\frac{\partial x_{11}}{\partial \phi_1} & \frac{\partial x_{12}}{\partial \phi_1}&0
\end{vmatrix}\\
=&\left[-\frac{\partial x_{21}}{\partial \phi_2}\frac{\partial x_{22}}{\partial \phi}+\frac{\partial x_{22}}{\partial \phi_2}\frac{\partial x_{21}}{\partial \phi}\right]\left[\frac{\partial x_{11}}{\partial t}\frac{\partial x_{12}}{\partial \phi_1}-\frac{\partial x_{12}}{\partial t}\frac{\partial x_{11}}{\partial \phi_1}\right]-\\&-\left[-\frac{\partial x_{21}}{\partial \phi_2}\frac{\partial x_{22}}{\partial t}+\frac{\partial x_{22}}{\partial \phi_2}\frac{\partial x_{21}}{\partial t}\right]\left[\frac{\partial x_{11}}{\partial \phi}\frac{\partial x_{12}}{\partial \phi_1}-\frac{\partial x_{12}}{\partial \phi}\frac{\partial x_{11}}{\partial \phi_1}\right].
\end{align*}

Therefore, by substitution
\begin{multline*}
    J\Phi=\left[\frac{\sin\phi_2}{\kappa_L(\phi_2)}\frac{\partial x_{22}}{\partial \phi}+\frac{\cos\phi_2}{\kappa_L(\phi_2)}\frac{\partial x_{21}}{\partial \phi}\right]\left[-\cos\phi\frac{\cos\phi_1}{\kappa_L(\phi_1)}+\sin\phi\frac{-\sin\phi_1}{\kappa_L(\phi_1)}\right]-\\-\left[\frac{-\sin\phi_2}{\kappa_L(\phi_2)}(-\sin\phi)+\frac{\cos\phi_2}{\kappa_L(\phi_2)}(-\cos\phi)\right]\left[\frac{\partial x_{11}}{\partial \phi}\frac{\cos\phi_1}{\kappa_L(\phi_1)}+\frac{\partial x_{12}}{\partial \phi}\frac{\sin\phi_1}{\kappa_L(\phi_1)}\right]
    \end{multline*}

Thus,
$$\left|J\Phi\right|=\frac{\sin(\left|\phi_1-\phi_2\right|)}{\kappa_L(\phi_1)\kappa_L(\phi_2)}\cdot\left|\frac{1}{\kappa_K(\phi)}-\frac{1}{\kappa_L(\phi)}-t\right|.$$
We note that $\left|u_1\times u_2\right|$ equals the sine of the length of the unit circular arc between $u_1$ and $u_2$, that is, $\sin(\left|\phi_1-\phi_2\right|)=\left|u_1\times u_2\right|$. Furthermore, by the assumption on the curvatures $\kappa_K(\phi)>\kappa_L(\phi)$, we have
$$\left|J\Phi\right|=\frac{\left|u_1\times u_2\right|}{\kappa_L(u_1)\kappa_L(u_2)}\left(\frac{1}{\kappa_L(u)}-\frac{1}{\kappa_K(u)}+t\right),$$ which proves \eqref{jacobi}.

\begin{bibdiv}
	\begin{biblist}
	\bib{Bar08}{article}{
	author={B\'{a}r\'{a}ny, Imre},
	title={Random points and lattice points in convex bodies},
	journal={Bull. Amer. Math. Soc. (N.S.)},
	volume={45},
	date={2008},
	number={3},
	pages={339--365},
	issn={0273-0979},
}

\bib{BL07}{article}{
	author={Bezdek, K\'{a}roly},
	author={L\'{a}ngi, Zsolt},
	author={Nasz\'{o}di, M\'{a}rton},
	author={Papez, Peter},
	title={Ball-polyhedra},
	journal={Discrete Comput. Geom.},
	volume={38},
	date={2007},
	number={2},
	pages={201--230},
	issn={0179-5376},
}

\bib{B56}{book}{
	author={Blaschke, Wilhelm},
	title={Kreis und Kugel},
	language={German},
	note={2te Aufl},
	publisher={Walter de Gruyter \& Co., Berlin},
	date={1956},
	pages={viii+167},
}

\bib{BFRV09}{article}{
	author={B\"{o}r\"{o}czky, K. J.},
	author={Fodor, F.},
	author={Reitzner, M.},
	author={V\'{\i}gh, V.},
	title={Mean width of random polytopes in a reasonably smooth convex body},
	journal={J. Multivariate Anal.},
	volume={100},
	date={2009},
	number={10},
	pages={2287--2295},
	issn={0047-259X},
}

\bib{E65}{article}{
	author={Efron, Bradley},
	title={The convex hull of a random set of points},
	journal={Biometrika},
	volume={52},
	date={1965},
	pages={331--343},
	issn={0006-3444},
}

\bib{FT182}{article}{
	author={Fejes T\'{o}th, L.},
	title={Packing of $r$-convex discs},
	journal={Studia Sci. Math. Hungar.},
	volume={17},
	date={1982},
	number={1-4},
	pages={449--452},
	issn={0081-6906},
}

\bib{FT282}{article}{
	author={Fejes T\'{o}th, L.},
	title={Packing and covering with $r$-convex discs},
	journal={Studia Sci. Math. Hungar.},
	volume={18},
	date={1982},
	number={1},
	pages={69--73},
	issn={0081-6906},
}

\bib{FTGF15}{article}{
   author={Fejes T\'{o}th, G.},
   author={Fodor, F.},
   title={Dowker-type theorems for hyperconvex discs},
   journal={Period. Math. Hungar.},
   volume={70},
   date={2015},
   number={2},
   pages={131--144},
   issn={0031-5303},
}

\bib{F19}{article}{
author={Fodor, Ferenc},
title={Random ball-polytopes in smooth convex bodies},
journal={arXiv:1906.11480},
date={2019},
}

\bib{FKV14}{article}{
	author={Fodor, F.},
	author={Kevei, P.},
	author={V\'{i}gh, V.},
	title={On random disc polygons in smooth convex discs},
	journal={Adv. in Appl. Probab.},
	volume={46},
	date={2014},
	number={4},
	pages={899--918},
	issn={0001-8678},
}

\bib{FoKuVi16}{article}{
author={F. Fodor},
author={\'A. Kurusa},
author={V. V\'igh},
title={Inequalities for hyperconvex sets},
journal={Adv. in Geom.},
volume={16},
date={2016},
number={3},
pages={337--348},
}

\bib{FV12}{article}{
	author={Fodor, Ferenc},
	author={V\'{\i}gh, Viktor},
	title={Disc-polygonal approximations of planar spindle convex sets},
	journal={Acta Sci. Math. (Szeged)},
	volume={78},
	date={2012},
	number={1-2},
	pages={331--350},
	issn={0001-6969},
}

\bib{FV18}{article}{
   author={Fodor, Ferenc},
   author={V\'{\i}gh, Viktor},
   title={Variance estimates for random disc-polygons in smooth convex
   discs},
   journal={J. Appl. Probab.},
   volume={55},
   date={2018},
   number={4},
   pages={1143--1157},
   issn={0021-9002},
}

\bib{Hug99}{book}{
	author={Hug, D.},
	title={Measures, curvatures and currents in convex geometry},	
	year={1999},
	school={Albert Ludwigs Universit\"{a}t Freiburg},
	publisher={Habilitationsschrift}
}

\bib{H13}{article}{
	author={Hug, Daniel},
	title={Random polytopes},
	conference={
		title={Stochastic geometry, spatial statistics and random fields},
	},
	book={
		series={Lecture Notes in Math.},
		volume={2068},
		publisher={Springer, Heidelberg},
	},
	date={2013},
	pages={205--238},
}

\bib{L}{article}{
	author={L\'{a}ngi, Zsolt},
	author={Nasz\'{o}di, M\'{a}rton},
	author={Talata, Istv\'{a}n},
	title={Ball and spindle convexity with respect to a convex body},
	journal={Aequationes Math.},
	volume={85},
	date={2013},
	number={1-2},
	pages={41--67},
	issn={0001-9054},
}

\bib{MMO19}{book}{
   author={Martini, Horst},
   author={Montejano, Luis},
   author={Oliveros, D\'{e}borah},
   title={Bodies of constant width},
   note={An introduction to convex geometry with applications},
   publisher={Birkh\"{a}user/Springer, Cham},
   date={2019},
   pages={xi+486},
   isbn={978-3-030-03866-3},
   isbn={978-3-030-03868-7},
}

\bib{M35}{article}{
	author={Mayer, A. E.},
	title={Eine \"{U}berkonvexit\"{a}t},
	journal={Math. Z.},
	volume={39},
	date={1935},
	pages={511--531},
}

\bib{P19}{book}{
author={Papv\'ari, D\'aniel},
title={Konvex lemezek v\'eletlen eltoltjainak metszete (On intersections of random translates of a convex disc)},
language={Hungarian},
note={Thesis (Bachelor's)--University of Szeged, Hungary},
date={2019},
}

\bib{PP17}{article}{
   author={Paouris, Grigoris},
   author={Pivovarov, Peter},
   title={Randomized isoperimetric inequalities},
   conference={
      title={Convexity and concentration},
   },
   book={
      series={IMA Vol. Math. Appl.},
      volume={161},
      publisher={Springer, New York},
   },
   date={2017},
   pages={391--425},
}

\bib{R10}{article}{
	author={Reitzner, Matthias},
	title={Random polytopes},
	conference={
		title={New perspectives in stochastic geometry},
	},
	book={
		publisher={Oxford Univ. Press, Oxford},
	},
	date={2010},
	pages={45--76},
}

\bib{RS63}{article}{
	author={R\'{e}nyi, A.},
	author={Sulanke, R.},
	title={\"{U}ber die konvexe H\"{u}lle von $n$ zuf\"{a}llig gew\"{a}hlten Punkten},
	language={German},
	journal={Z. Wahrscheinlichkeitstheorie und Verw. Gebiete},
	volume={2},
	date={1963},
	pages={75--84 (1963)},
}

\bib{RS64}{article}{
	author={R\'{e}nyi, A.},
	author={Sulanke, R.},
	title={\"{U}ber die konvexe H\"{u}lle von $n$ zuf\"{a}llig gew\"{a}hlten Punkten. II},
	language={German},
	journal={Z. Wahrscheinlichkeitstheorie und Verw. Gebiete},
	volume={3},
	date={1964},
	pages={138--147 (1964)},
}

\bib{RS68}{article}{
	author={R\'{e}nyi, A.},
	author={Sulanke, R.},
	title={Zuf\"{a}llige konvexe Polygone in einem Ringgebiet},
	language={German},
	journal={Z. Wahrscheinlichkeitstheorie und Verw. Gebiete},
	volume={9},
	date={1968},
	pages={146--157},
}

\bib{S46}{article}{
   author={Santal\'{o}, L. A.},
   title={On plane hyperconvex figures},
   language={Spanish, with English summary},
   journal={Summa Brasil. Math.},
   volume={1},
   date={1946},
   pages={221--239 (1948)},
}

\bib{Schneider}{book}{
	author={Schneider, Rolf},
	title={Convex bodies: the Brunn-Minkowski theory},
	series={Encyclopedia of Mathematics and its Applications},
	volume={151},
	edition={Second expanded edition},
	publisher={Cambridge University Press, Cambridge},
	date={2014},
	pages={xxii+736},
	isbn={978-1-107-60101-7},
}

\bib{Sch18}{article}{
	author={Schneider, Rolf},
	title={Discrete aspects of stochastic geometry},
	conference={
		title={Handbook of discrete and computational geometry, 3rd ed.},
	},
	book={
		publisher={CRC, Boca Raton, FL},
	},
	date={2018},
	pages={299--329},
}

\bib{Schn8}{article}{
	author={Schneider, Rolf},
	title={Recent results on random polytopes},
	journal={Boll. Unione Mat. Ital. (9)},
	volume={1},
	date={2008},
	number={1},
	pages={17--39},
	issn={1972-6724},
}

\bib{Sch08}{book}{
	author={Schneider, Rolf},
	author={Weil, Wolfgang},
	title={Stochastic and integral geometry},
	series={Probability and its Applications (New York)},
	publisher={Springer-Verlag, Berlin},
	date={2008},
	pages={xii+693},
	isbn={978-3-540-78858-4},
}

\bib{Weil93}{article}{
	author={Weil, Wolfgang},
	author={Wieacker, John A.},
	title={Stochastic geometry},
	conference={
		title={Handbook of convex geometry, Vol. A, B},
	},
	book={
		publisher={North-Holland, Amsterdam},
	},
	date={1993},
	pages={1391--1438},
}
	\end{biblist}
\end{bibdiv}
\end{document}